\newtheorem{precor}{{\bf Corollary}}
\newtheorem{precon}{{\bf Conjecture}}
\newtheorem{prealphcon}{{\bf Conjecture}}
\newtheorem{predefin}{{\bf Definition}}
\newenvironment{defin}[1]{\begin{predefin}{\hspace{-0.5
                   em}{\bf.\ }}{\rm #1}\hfill{$\spadesuit$}}{\end{predefin}}
\newtheorem{preexm}{{\bf Example}}
\newtheorem{preappl}{{\bf Application}}
\newtheorem{prelem}{{\bf Lemma}}
\newenvironment{lem}{\begin{prelem}{\hspace{-0.5
               em}{\bf.\ }}}{\end{prelem}}
\newtheorem{preproof}{{\bf Proof.\ }}
\newenvironment{proof}[1]{\begin{preproof}{\rm
               #1}\hfill{$\blacksquare$}}{\end{preproof}}
\newtheorem{prethm}{{\bf Theorem}}
\newenvironment{thm}{\begin{prethm}{\hspace{-0.5
               em}{\bf.\ }}}{\end{prethm}}
\newtheorem{prealphthm}{{\bf Theorem}}
\newenvironment{alphthm}{\begin{prealphthm}{\hspace{-0.5
               em}{\bf.\ }}}{\end{prealphthm}}
\newtheorem{prealphlem}{{\bf Lemma}}
\newenvironment{alphlem}{\begin{prealphlem}{\hspace{-0.5
               em}{\bf.\ }}}{\end{prealphlem}}
\newtheorem{prepro}{{\bf Proposition}}
\newtheorem{preprb}{{\bf Problem}}
\newtheorem{prerem}{{\bf Remark}}
\newtheorem{preapp}{{\bf Application}}
\newtheorem{prequ}{{\bf Question}}
\newtheorem{preclaim}{{\bf Claim}}
\def\conct[#1,#2]{\mbox {${#1} \leftrightarrow {#2}$}}
\def\dconct[#1,#2]{\mbox {${#1} \rightarrow {#2}$}}
\def\deg[#1,#2]{\mbox {$d_{_{#1}}(#2)$}}
\def\mindeg[#1]{\mbox {$\delta_{_{#1}}$}}
\def\maxdeg[#1]{\mbox {$\Delta_{_{#1}}$}}
\def\outdeg[#1,#2]{\mbox {$d_{_{#1}}^{^+}(#2)$}}
\def\minoutdeg[#1]{\mbox {$\delta_{_{#1}}^{^+}$}}
\def\maxoutdeg[#1]{\mbox {$\Delta_{_{#1}}^{^+}$}}
\def\indeg[#1,#2]{\mbox {$d_{_{#1}}^{^-}(#2)$}}
\def\minindeg[#1]{\mbox {$\delta_{_{#1}}^{^-}$}}
\def\maxindeg[#1]{\mbox {$\Delta_{_{#1}}^{^-}$}}
\def\dre[#1,#2,#3]{\mbox {${\cal E}^{^{#3}}(#1,#2)$}}
\def\var[#1,#2]{\mbox {${\rm Var}_{_{#1}}(#2)$}}
\def\ls[#1]{\mbox {$\xi^{^{#1}}$}}
\def\hom[#1,#2]{\mbox {${\rm Hom}({#1},{#2})$}}
\def\onvhom[#1,#2]{\mbox {${\rm Hom^{v}}(#1,#2)$}}
\def\onehom[#1,#2]{\mbox {${\rm Hom^{e}}(#1,#2)$}}
\def\core[#1]{\mbox {$#1^{^{\bullet}}$}}
\def\cay[#1,#2]{\mbox {${\rm Cay}({#1},{#2})$}}
\def\sch[#1,#2,#3]{\mbox {${\rm Sch}({#1},{#2},{#3})$}}
\def\cays[#1,#2]{\mbox {${\rm Cay_{s}}({#1},{#2})$}}
\def\dirc[#1]{\mbox {$\stackrel{\rightarrow}{C}_{_{#1}}$}}
\def\cycl[#1]{\mbox {${\bf Z}_{_{#1}}$}}
\begin{document}
\begin{center}
{\Large \bf  On Chromatic Number and Minimum Cut}\\
\vspace{0.3 cm}
{\bf Meysam Alishahi$^\dag$ and Hossein Hajiabolhassan$^\ast$\\
{\it $^\dag$ School of Mathematical Sciences}\\
{\it University of Shahrood, Shahrood, Iran}\\
{\tt meysam\_alishahi@shahroodut.ac.ir}\\
{\it $^\ast$ Department of Applied Mathematics and Computer Science}\\
{\it Technical University of Denmark}\\
{\it DK-{\rm 2800} Lyngby, Denmark}\\
{\it $^\ast$ Department of Mathematical Sciences}\\
{\it Shahid Beheshti University, G.C.}\\
{\it P.O. Box {\rm 19839-63113}, Tehran, Iran}\\
{\tt hhaji@sbu.ac.ir}\\
}
\end{center}
\begin{abstract}
\noindent 
For a graph $G$,  the tree graph ${\cal T}_{G,t}$ has all tree subgraphs of $G$ with $t$ vertices as vertex set and two tree subgraphs are neighbors if they are edge-disjoint. Also, the $r^{th}$ cut number of $G$ is the minimum number of edges between parts of a partition of vertex set of $G$ into two parts such that each part has size at least $r$. We show that 
if $t=(1-o(1))n$ and $n$ is large enough, then for any dense graph $G$ with $n$ vertices, the chromatic number of  the tree graph ${\cal T}_{G,t}$ is equal to  the $(n-t+1)^{th}$ cut number of $G$. In particular, as a consequence, we prove that if $n$ is large enough and $G$ is a dense graph, then the chromatic number of the spanning tree graph ${\cal T}_{G,n}$ is equal to the size of the minimum cut of $G$. The proof method is based on alternating Tur\'an number inspired by Tucker's lemma, an equivalent combinatorial version of the Borsuk-Ulam theorem.\\

\noindent {\bf Keywords:}\ { Chromatic Number, General Kneser Graph, Minimum Cut.}\\
{\bf Subject classification: 05C15}
\end{abstract}
\section{Introduction}
 The usual {\it Kneser graph} ${\rm KG(n,k)}$ is a graph whose vertex set consists of all $k$-subsets of $[n]=\{1,\ldots,n\}$ and two vertices are adjacent if the corresponding $k$-subsets are disjoint. 
The usual Kneser graphs are generalized in several ways. For a given graph $G$ and a positive integer $t$, where  $1\leq t\leq |V(G)|$,  
define the {\it tree graph} ${\cal T}_{G,t}$ to be a graph whose vertex set consists of all tree subgraphs of $G$ with $t$ vertices and two vertices are adjacent if the corresponding tree subgraphs are edge-disjoint. 
Note that ${\cal T}_{G,t}$ can be considered as a generalization of the usual Kneser graphs.
For this purpose, note that if $G$ is a star with $n$ edges, then the graph ${\cal T}_{G,t}$ is isomorphic  to ${\rm KG}(n,t)$. 
 
A partition $(X,Y)$ of the vertex set of a graph $G$ is called a {\it cut of $G$}.
 The size of this cut is the number of edges in $G$ which meet both $X$ and $Y$, that is, 
 $|E(G[X,Y])|$. A cut $(X,Y)$ is called an $r$-cut, if $\min\{|X|,|Y|\}\geq r$.
The minimum possible size of an $r$-cut in $G$ is called the $r^{th}$ cut number of $G$ and is denoted by ${\rm cut}_r(G)$,
 
 In this paper, we are interested in finding the chromatic number of the tree graph ${\cal T}_{G,t}$. 
Let $t\in\{1,2,\ldots,\lfloor{n\over 2}\rfloor-1\}$, where $|V(G)|=n$. Since the vertex set of any tree subgraph of $G$ with $n-t$ vertices meets both sides of any $(t+1)$-cut of $G$, we have  $$\chi({\cal T}_{G,n-t})\leq {\rm cut}_{t+1}(G).$$
As a main result of this paper, we prove the next theorem.
\begin{thm}\label{firstthm}
Let $n$ and $r$ be nonnegative integers and $\delta$ be a real number, where 
${5\over 6}< \delta < 1$ and $r=o(n)$.
If $n$ is sufficiently large, then
for any graph $G$ with $n$ vertices and $\delta(G)\geq \delta n$ we have 
$$\chi({\cal T}_{G,n-r})={\rm cut}_{r+1}(G).$$
\end{thm}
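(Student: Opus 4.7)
The upper bound $\chi({\cal T}_{G,n-r})\leq {\rm cut}_{r+1}(G)$ is already noted in the excerpt; for completeness, fix a minimum $(r+1)$-cut $(X,Y)$ with crossing edges $e_1,\dots,e_C$ where $C={\rm cut}_{r+1}(G)$. Each tree $T$ on $n-r$ vertices must meet both $X$ and $Y$ (otherwise the complementary side has $\leq r$ vertices, contradicting $\min(|X|,|Y|)\geq r+1$) and, by connectivity, contains at least one of the $e_i$'s. Coloring $T$ by the smallest such $i$ is proper because two monochromatic trees share the corresponding edge. The substance of the theorem is the matching lower bound.

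For the lower bound $\chi({\cal T}_{G,n-r})\geq {\rm cut}_{r+1}(G)$, the plan is to apply the alternating (Tucker-based) strengthening of Dol'nikov's theorem that is the topological tool advertised in the abstract. View ${\cal T}_{G,n-r}$ as the general Kneser graph $\mathrm{KG}(\mathcal{H})$ whose ground set is $E(G)$ and whose hyperedges are the edge sets of the tree subgraphs of $G$ on $n-r$ vertices. For any ordering $\pi$ of $E(G)$, call a signing $\sigma\in\{-,0,+\}^{E(G)}$ alternating along $\pi$ if the nonzero entries of $\sigma$, read in $\pi$-order, strictly alternate between $+$ and $-$; let $\mathrm{alt}_\pi(\mathcal{H})$ be the maximum number of nonzero entries of such a signing for which neither the positive nor the negative part contains any member of $\mathcal{H}$. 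The Tucker-type lower bound then reads
\[
\chi({\cal T}_{G,n-r}) \;\geq\; |E(G)| - \mathrm{alt}_\pi(\mathcal{H}),
\]
so it suffices to exhibit an ordering $\pi$ with $\mathrm{alt}_\pi(\mathcal{H})\leq |E(G)|-{\rm cut}_{r+1}(G)$; equivalently, every alternating signing that leaves fewer than ${\rm cut}_{r+1}(G)$ edges unsigned must place the edge set of some tree on $n-r$ vertices in its positive or negative part.

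The ordering I would try places the crossing edges of a fixed optimal $(r+1)$-cut $(X,Y)$ first, followed by the interior edges of $X$ and then of $Y$ in a deliberate (say, BFS-based) order on each side. Suppose $\sigma$ is an alternating signing along this $\pi$ with fewer than ${\rm cut}_{r+1}(G)$ unsigned edges, and neither $(V,E^+(\sigma))$ nor $(V,E^-(\sigma))$ contains a tree on $n-r$ vertices. Then each of these two subgraphs admits a vertex partition into two blocks of sizes $\geq r+1$ whose crossing edges lie entirely outside the corresponding sign class, producing two ``witnessing'' $(r+1)$-cuts of $G$. The small unsigned set and the alternation pattern constrain these witnesses to overlap in all but a controlled number of edges, so in effect $G$ contains two $(r+1)$-cuts whose combined edge support is much smaller than what a graph with minimum degree $\geq (5/6)n$ can support once $r=o(n)$. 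A double-counting of edges incident to each side of the optimal cut $(X,Y)$ then yields the contradiction.

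The principal obstacle is this last combinatorial lemma: converting the alternation-plus-small-defect hypothesis into two sparse, nearly disjoint $(r+1)$-cuts, and ruling them out using the density. I expect the threshold $5/6$ (rather than $1/2$) to arise because each of the two witnessing cuts contributes edges out of a single side of the optimal cut, and the slack needed to close the two-sided double count first becomes positive when $\delta>5/6$; the assumption $r=o(n)$ is used to absorb the lower-order error terms and make the inequalities tight asymptotically. Once this lemma is established, the Tucker-based inequality delivers $\chi({\cal T}_{G,n-r})\geq |E(G)|-(|E(G)|-{\rm cut}_{r+1}(G))={\rm cut}_{r+1}(G)$, which together with the upper bound finishes the proof.
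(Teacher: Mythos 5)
Your upper bound argument is correct and matches the paper's. You also correctly identify the right framework for the lower bound: express ${\cal T}_{G,n-r}$ as a general Kneser graph and use the alternating Tur\'an number / Tucker-type inequality $\chi \geq |E(G)|-\mathrm{ex}_{\mathrm{alt}}$ (Lemma~\ref{ex} in the paper). The work then reduces to producing a single ordering $\sigma$ of $E(G)$ for which every long alternating $2$-coloring forces a monochromatic spanning tree on $n-r$ vertices. Up to this point you are aligned with the paper.

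The gap is in the choice of ordering and in what you claim it gives you. Placing the crossing edges of one optimal $(r{+}1)$-cut first, then BFS-ordering each side, does not control the color structure well enough. The paper's entire Step~I is devoted to building a very particular $\sigma$: it decomposes $G$ into per-vertex $C_4$-gadgets $H_i$ obtained from a monogamous $C_4$-packing (via Hajnal--Szemer\'edi), four Hamiltonian cycles (via Dirac), a large edge-disjoint triangle packing of the remainder, and finally stitches everything together with an Eulerian tour. The point of this construction is Lemma~\ref{orderinglemma}: with an Eulerian-tour ordering, at most $\tfrac{\deg_H(v)+2}{2}$ edges at any vertex can get the same color. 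This yields the degree bound $\max\{\deg_{G^R}(v),\deg_{G^B}(v)\}\leq\tfrac{\deg_G(v)+O(k)}{2}$, which is exactly what drives the neutral-edge count in Step~IV: once the largest red component $\Omega_R$ has fewer than $n-r$ vertices, every vertex $u$ on the small side of the associated cut has $\deg_{G^R}(u)$ small and $\deg_{G^B}(u)\leq\tfrac{\deg_G(u)+O(k)}{2}$, forcing roughly $\tfrac{\deg_G(u)}{2}$ neutral edges at $u$. A BFS ordering with cut edges grouped at the front does not give any such bound on $\deg_{G^R}(v)$ or $\deg_{G^B}(v)$; an adversary can comfortably route all edges at a high-degree vertex into a single color while respecting alternation, so ``the alternation pattern constrains these witnesses to overlap'' is not established. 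Without the Euler-tour degree control, the final counting step has no foothold.

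Your last paragraph is also a speculation rather than a proof: you write that you ``expect'' the threshold $5/6$ to arise from a two-sided double count, but the paper's $\delta>\tfrac{5}{6}$ comes from a concrete, different place. In Case~I (both witness cuts balanced, $\alpha,\beta=\Theta(n)$), the paper counts blue edges inside the big component via the triangle packing of $G_k$: the number of unusual (neutral-or-blue) edges gotten from edge-disjoint triangles plus the guaranteed blue edges across the cut $H_R$ exceed $\tfrac{|E(G_k)|}{2}$ exactly when a quadratic $\tfrac{1}{6}x^2-(\tfrac13+\tfrac{\mu}{4})x+\tfrac{2\delta-1}{4}$ is positive, whose discriminant is negative precisely for $\delta>\tfrac56$. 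Case~II (both witness cuts small, $\alpha,\beta=O(r)$) uses the per-vertex $C_4$-gadgets $H_i$ and $(\sigma,v)$-consecutive path counts --- again tied to the specific $\sigma$. So the ordering, the clique/cycle decompositions that make it Eulerian-friendly, and the case split on $\alpha,\beta$ are all load-bearing; none of them is supplied by your sketch.
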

It should be mentioned that we actually  prove a more general statement (Theorem~\ref{mainthm}) which implies the previous theorem immediately. However, we need some technical definitions to state this statement.

\section{Motivation and Background}
A graph $G$ is called a {\it labeled graph}, if there exists a bijective map from 
the vertex set of $G$ to a set of labels of size $|V(G)|$. Two labeled graphs  $G$ and $H$ are isomorphic, if 
there is a bijective map between their vertex sets which preserves adjacency, 
non-adjacency, and labels. Also, 
for any subgraph $H$ of $G$, the labels of the vertices of $H$ are inherited from $G$. 
For an unlabeled (resp. labeled) graph $G$  and a nonempty family ${\cal F}$ of unlabeled (resp. labeled) graphs, 
the general Kneser graph ${\rm KG}(G, {\cal F})$ has all subgraphs of $G$ isomorphic to some members of ${\cal F}$ as vertex set and two vertices of ${\rm KG}(G, {\cal F})$ are adjacent 
if the corresponding subgraphs are edge-disjoint. 
It is known that for any graph $G$ there are many general Kneser graphs isomorphic to $G$, 
see~\cite{2013arXiv1302.5394A, 2014arXiv1401.0138A}. 
In~\cite{MR2117355}, it was shown that if the complement of $G$ is a connected graph, then there is a labeled tree $T$ and a family ${\cal T}$ of subtrees $T$ such that $G$ and ${\rm KG}(T, {\cal T})$ are isomorphic. Also, it is known that if $T_3$ is a labeled tree with maximum degree at most $3$ and ${\cal T}$ is a family of 
subtrees of $T_3$, then the graph 
${\rm KG}(T, {\cal T})$
is isomorphic to the complement of a chordal graph, and moreover, the complement of any choral graph has
such a representation, see~\cite{MR2117355}. For more about general Kneser graphs, see~\cite{2013arXiv1302.5394A, 2014arXiv1401.0138A, 2014arXiv1403.4404A}.

Hereafter, unless otherwise stated, we only consider unlabeled graphs. Several well-known families of graphs can be represented by some general Kneser graphs of unlabeled graphs. For instance, the general Kneser graphs ${\rm KG}(nK_2, kK_2)\cong{\rm KG}(K_{1,n}, K_{1,k})$, ${\rm KG}(C_n, kK_2)$, and ${\rm KG}(C_n, P_{d})$,  
are isomorphic to the Kneser graph ${\rm KG}(n, k)$, the Schrijver graph ${\rm SG}(n, k)$ and the circular complete graph $K_{n\over d}$, respectively, where $P_{d}$ is a path of length $d$.  

In this paper, we investigate the chromatic number ${\rm KG}(G,{\cal T}_t)\cong {\cal T}_{G,t}$, where 
the family ${\cal T}_t$ consists of all tree subgraphs of $G$ with $t$ vertices.
This problem is motivated by the problem of finding the clique number of spanning tree graphs, i.e., the maximum number of edge-disjoint spanning tree subgraphs of a graph. The later problem has been studied in the literature, see~\cite{MR0369117, MR1812338}. It was shown \cite{MR0369117} that  the maximum number of edge-disjoint spanning tree graphs of a graph $G$ is bounded below by $\lfloor{{\rm cut}_1(G)-1\over 2}\rfloor$.
It is simple to see that the maximum number of edge-disjoint spanning tree graphs of $G$ is bounded above by the size of the minimum cut of $G$. In this paper, we show that the chromatic of the spanning tree graph of a dense graph $G$ with a  large number of vertices is equal to the size of the minimum cut (the minimum degree) of the graph $G$. 

This paper is organized as follows. Section~\ref{altdef} is concerned with 
some notation that will be used later. In particular, we introduce the concept of alternating Tur\'an number. Section~\ref{resultsec} is devoted to the chromatic number of tree graphs. 
\section{Alternating Tur\'an Number}\label{altdef}

Let $G$ be a graph, $H$ be a subgraph of $G$, and ${\cal G}=\{G_1,\ldots,G_k\}$ be a family of nonempty subgraphs of $G$. The subgraph 
$H$ is called  a {\it  ${\cal G}$-subgraph}, if for any $1\leq i \leq k$, $E(H)\cap E(G_i)\not=~\varnothing$. 
In this regard, by a {\it ${\cal G}$-forest {\rm(}{\it ${\cal G}$-tree}{\rm)}}, we mean 
a forest (resp. tree) which is also a ${\cal G}$-subgraph. 
Also, throughout the paper, we write $G\setminus H$ and $G-H$ for the subgraphs of $G$ obtained by 
removing the edges and the vertices of $H$, respectively. 
For any family of graphs ${\cal F}$, the {\it generalized Tur\'an number} ${\rm ex}(G,{\cal G}, {\cal F})$ is the maximum number of edges of a spanning subgraph of 
$G$ such that it has no ${\cal G}$-subgraph isomorphic to some member of ${\cal F}$. 
For a graph $G$ and a linear ordering $\sigma: e_1<e_2<\cdots<e_{|E(G)|}$ of  $E(G)$, 
an {\it alternating  $2$-coloring} of $E(G)$ with respect to the 
ordering $\sigma$ is a mapping which assigns alternatively (with respect to $\sigma$) two colors red and blue to a subset of $E(G)$.  
For an edge $e\in E(G)$, if the red (resp. blue) color is assigned to $e$, then this edge is called a {\it red edge} (resp. {\it blue edge}) and if no color is assigned to $e$, then it is called a {\it neutral} edge.
Note that 
in an alternating $2$-coloring of $E(G)$,
any two consecutive colored edges (with respect to the ordering $\sigma$) have different colors.
Moreover, the  {\it red  subgraph} $G^R$ (resp.  {\it blue  subgraph} $G^B$) is a spanning subgraph of $G$ whose edge set is the set of all red (resp. blue) edges of $G$. 
The {\it length} of an alternating $2$-coloring of $E(G)$ is the number of colored edges, i.e., $|E(G^R)|+ |E(G^B)|$. 
For a subgraph $H$ of $G$, 
the set of neutral edges of $E(H)$ is denoted by ${\rm NEU}(H)$. Moreover, 
the set of neutral edges of $E(H)$ incident with a vertex $v$ of $H$ is denoted by ${\rm NEU}(v,H)$. 
Define ${\rm ex}_{alt}(G,{\cal G}, {\cal F}, \sigma)$ to be the maximum length of an alternating $2$-coloring $h$ of $E(G)$ 
with respect to the  
ordering $\sigma$ such that it has no monochromatic (red or blue) ${\cal G}$-subgraph isomorphic to a member of ${\cal F}$, or equivalently,  each of $G^R$ and $G^B$ has no ${\cal G}$-subgraph isomorphic to a member of ${\cal F}$. 
Define the {\it generalized alternating Tur\'an number} ${\rm ex}_{alt}(G,{\cal G},{\cal F})$ as follows
$${\rm ex}_{alt}(G,{\cal G},{\cal F})=\displaystyle\min_{\sigma} \left\{{\rm ex}_{alt}(G,{\cal G},{\cal F},\sigma): \sigma\ {\rm is}\  {\rm an}\  {\rm ordering}\  {\rm of}\ E(G)\right\}.$$
In view of definition of generalized Tur\'an number and generalized alternating Tur\'an number, one can check that 
\begin{equation}\label{exexalt}
{\rm ex}(G, {\cal G},{\cal F})\leq {\rm ex}_{alt}(G, {\cal G}, {\cal F}) \leq 2{\rm ex}(G, {\cal G},{\cal F}).
\end{equation} 
The general Kneser graph ${\rm KG}(G, {\cal G}, {\cal F})$ has all ${\cal G}$-subgraphs of $G$ isomorphic to some members of 
${\cal F}$ as vertex set and two vertices are adjacent if the corresponding ${\cal G}$-subgraphs are edge-disjoint. Note that ${\rm KG}(G, {\cal G}, {\cal F})$ is a subgraph of ${\rm KG}(G, {\cal F})$ and if ${\cal G}=\{G\}$, then two graphs ${\rm KG}(G, {\cal G}, {\cal F})$ and ${\rm KG}(G, {\cal F})$ are isomorphic. 
In~\cite{2014arXiv1401.0138A}, several examples presented to show that equality holds in both of inequalities 
in~(\ref{exexalt}). One can see that the chromatic number of ${\rm KG}(G, {\cal G}, {\cal F})$ is bounded above by 
$|E(G)|-{\rm ex}(G,{\cal G}, {\cal F})$. To see this, consider a subgraph $H$ of $G$ with ${\rm ex}(G,{\cal G}, {\cal F})$ edges such that it has no ${\cal G}$-subgraph isomorphic to any member of ${\cal F}$. Assume that $E(G)\setminus E(H)=\{e_1,\ldots,e_t\}$. 
For any vertex of ${\rm KG}(G, {\cal G}, {\cal F})$, which is a ${\cal G}$-subgraph of $G$, consider the minimum number $i$ such that $e_i$ is 
an edge of this ${\cal G}$-subgraph. Assign $i$ as a color to this vertex. 
One can see that it is a proper coloring for ${\rm KG}(G, {\cal G}, {\cal F})$. 
On the other hand, it is proved that the chromatic number of ${\rm KG}(G, {\cal G}, {\cal F})$ is bounded below by $|E(G)|-{\rm ex}_{alt}(G, {\cal G}, {\cal F})$ \cite{2013arXiv1302.5394A,2014arXiv1401.0138A}. 
\begin{alphlem}\label{ex}{\rm \cite{2013arXiv1302.5394A,2014arXiv1401.0138A}}
Assume that $G$ is a graph and ${\cal G}=\{G_1,\ldots,G_k\}$ is a family of subgraphs of $G$.  
For any family ${\cal F}$ of graphs, we have 
$$\displaystyle
|E(G)|-{\rm ex}_{alt}(G, {\cal G}, {\cal F}) \leq \chi({\rm KG}(G, {\cal G}, {\cal F})) \leq  |E(G)|-{\rm ex}(G, {\cal G}, {\cal F}).$$
\end{alphlem}

It should be mentioned that the lower bound presented in Lemma~\ref{ex} was originally proved in~\cite{2013arXiv1302.5394A}, where it was stated in different terms. However, it was restated in terms of alternating Tur\'an numbers in~\cite{2014arXiv1401.0138A}.
\section{Tree Graphs}\label{resultsec}
Let $G$ be a graph and ${\cal T}_t$ be the family of all tree subgraphs of $G$ with $t$ vertices.
In this section, we investigate the chromatic number of the tree graph ${\rm KG}(G,{\cal T}_t)$ provided that $G$ is dense graph with sufficiently large number of vertices. In~\cite{MR0369117}, it was shown  
the clique number of the spanning tree graph ${\rm KG}(G,{\cal T}_n)$ is bounded below by $\lfloor{{\rm cut}_1(G)-1\over 2}\rfloor$, 
where ${\rm cut}_1(G)$ is the size of the minimum cut of the graph $G$. We show that if 
$n$ is sufficiently large, then for any dense graph $G$ with $n$ vertices, 
the chromatic number of the tree graph ${\rm KG}(G,{\cal T}_n)$ is equal to the size of the minimum cut of $G$. Note that, in general, equality does~not hold. For instance, $\chi({\rm KG}(K_3,{\cal T}_3))=1$ and $\chi({\rm KG}(K_4,{\cal T}_4))=2$. 

For two graphs $G$ and $H$, an {\it $H$-packing} of $G$ is a set $\{H_1,\ldots,H_t\}$ of pairwise edge-disjoint subgraphs of $G$ such that for each $1 \leq i \leq t$, the graph $H_i$ is isomorphic to $H$. 
Moreover, if the edge sets of the $H_i$'s partition the edge set of $G$, then it is termed an {\it $H$-decomposition}. 
The $H$-packing number of $G$ is the maximum cardinality of an $H$-packing of $G$. An $H$-packing (resp. $H$-decomposition) of $G$  is called a {\it monogamous $H$-packing} (resp. {\it monogamous $H$-decomposition}), if every pair of vertices of $G$ appears in at most one copy of $H$ in the packing (resp. decomposition). In~\cite{MR1723879}, it was shown that for positive even integers 
$m$ and $n$, the complete bipartite graph $K_{m,n}$ has a  monogamous $C_4$-decomposition if and only if $(m,n)=(2,2)$ or $6\leq n\leq m\leq 2n-2$. Note that if a graph $G$ has a $K_t$-packing, then it is a monogamous $K_t$-packing. 
The problem of finding as many as possible vertex-disjoint complete subgraphs of specified order
has been studied in several articles. Hajnal and Szemer{\'e}di~\cite{MR0297607} proved the next theorem. 
\begin{alphthm}{\rm \cite{MR0297607}}\label{Hajnal}
Assume that $G$ is a graph with $n$ vertices.
If $\delta(G) \geq (1-{1\over r})n$, then $G$ contains $\lfloor{n\over r}\rfloor$ 
vertex-disjoint copies of $K_r$.
\end{alphthm} 
Let $G$ be a graph with $n$ vertices and ${\cal G}=\{G_1,\ldots,G_k\}$ be a family of pairwise edge-disjoint subgraphs of $G$.
For any integer $1\leq i\leq n-1$, define the {\it $i^{th}$ cut number} of $G$ with respect to ${\cal G}$, ${\rm cut}_i(G, {\cal G})$,  as follows
$${\rm cut}_i(G, {\cal G})=|E(G)|-{\rm ex}(G, {\cal G}, {\cal T}_{n-i+1}).$$
If ${\cal G}=\{G\}$, then set ${\rm cut}_i(G)={\rm cut}_i(G, {\cal G})$. One can readily check that if  
$1\leq i \leq \lfloor {n\over 2}\rfloor$, then
$${\rm cut}_i(G)=\min\left\{|E(S,V(G)\setminus S)|:\ S\subseteq V(G),\ \ |S|\geq i,\ |V(G)\setminus S|\geq i \right\}.$$
Note that ${\rm cut}_1(G)$ is the size of the minimum cut of $G$. 

Consider a connected graph $G$ with $n$ vertices. Also, let ${\cal G}=\{G_1,G_2,\ldots,G_k\}$ 
be a decomposition of $G$. 
Lemma~\ref{ex} gives a lower and an upper bound for the chromatic number of ${\rm KG}(G,{\cal G}, {\cal T}_t)$ in terms of $|E(G)|$, ${\rm ex}(G, {\cal G}, {\cal T}_{t})$, and ${\rm ex}_{alt}(G, {\cal G}, {\cal T}_{t})$. One can see that 
the upper bound is equal to ${\rm cut}_{n-t+1}(G, {\cal G})$ provided that $t\geq \lceil{n\over 2}\rceil+1$.  
It would be of interest to find some sufficient conditions which make this upper bound sharp.
Next theorem, which is the main result of this paper, provides some sufficient conditions 
for the equality of  ${\rm cut}_{n-t+1}(G, {\cal G})$ and the chromatic number of ${\rm KG}(G,{\cal G}, {\cal T}_t)$. 

\begin{thm}\label{mainthm}
Let $k, n,$ and $r$ be positive integers and $\delta$ be a real number, where 
${5\over 6}< \delta < 1$, $k=o(\sqrt{n})$, and $r=o(n)$. There is a constant integer
$N=N(k,\delta,r)$ such that for any graph $G$ with $n\geq N$ vertices and any 
decomposition ${\cal G}=\{G_1,G_2,\ldots,G_k\}$  of  $G$ satisfying the following conditions
\begin{itemize}
\item $1\leq |E(G_1)|\leq |E(G_{2})|\leq \cdots\leq |E(G_k)|$,
\item either $k\leq 2$ or $\sum_{i=1}^3|E(G_i)|\geq 3{2k-3\choose 2}+4$, 
\item $G_k$ is a spanning subgraph of $G$ with $\delta(G_k)\geq \delta n$, 
\end{itemize}
we have
$$\chi({\rm KG}(G, {\cal G}, {\cal T}_{ n-r}))={\rm cut}_{r+1}(G, {\cal G}).$$
\end{thm}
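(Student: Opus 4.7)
The plan is to reduce the theorem to a statement about the alternating Tur\'an number by using Lemma~\ref{ex}. From the definition ${\rm cut}_{r+1}(G,{\cal G})=|E(G)|-{\rm ex}(G,{\cal G},{\cal T}_{n-r})$, the upper bound of Lemma~\ref{ex} gives $\chi({\rm KG}(G,{\cal G},{\cal T}_{n-r}))\leq {\rm cut}_{r+1}(G,{\cal G})$ for free. In view of the generic inequality (\ref{exexalt}), the matching lower bound amounts to constructing a linear ordering $\sigma$ of $E(G)$ for which ${\rm ex}_{alt}(G,{\cal G},{\cal T}_{n-r},\sigma)\leq {\rm ex}(G,{\cal G},{\cal T}_{n-r})$; equivalently, every alternating $2$-colouring of $E(G)$ with respect to $\sigma$ of length strictly greater than ${\rm ex}(G,{\cal G},{\cal T}_{n-r})$ must contain a monochromatic ${\cal G}$-tree on $n-r$ vertices.

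The ordering $\sigma$ will be engineered from the dense structure of $G_k$. Since $\delta(G_k)\geq \delta n>(1-\tfrac{1}{6})n$, Theorem~\ref{Hajnal} applied with $r=6$ supplies $\lfloor n/6\rfloor$ vertex-disjoint copies $Q_1,\ldots,Q_t$ of $K_6$ inside $G_k$, covering all but at most five vertices of $V(G)$. I would place the edges of the sparse pieces $G_1,\ldots,G_{k-1}$ at the beginning of $\sigma$, followed by the edges of $G_k$ outside $\bigcup_j E(Q_j)$, and finally the $15$-edge blocks $E(Q_1),\ldots,E(Q_t)$ placed consecutively at the end of $\sigma$. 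The technical hypothesis $\sum_{i=1}^3|E(G_i)|\geq 3{2k-3\choose 2}+4$ ensures that the smallest pieces $G_1,G_2,G_3$ have enough edges to avoid degenerate configurations in the alternating pattern, which is what guarantees that the monochromatic tree we eventually produce hits every $G_i$ and is therefore a genuine ${\cal G}$-tree.

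Given such an alternating colouring, the strategy is to pump up the two monochromatic pieces until one of them contains a ${\cal G}$-tree on $n-r$ vertices. The rarity of neutral edges--their total number is $|E(G)|-L<{\rm cut}_{r+1}(G,{\cal G})$, which is $O(rn)$ under our minimum-degree hypothesis--forces each colour class, when restricted to $G_k$, to inherit a near-spanning structure. Concretely, most of the $K_6$-blocks contain very few neutral edges, so that on a positive fraction of them the majority colour contributes enough edges to produce a connected subtree covering at least five (often all six) of the block's vertices; a pigeonhole step then locates a set of $\Omega(n)$ blocks where the majority colour coincides, say red. These block-level red subtrees are then amalgamated using red cross-block edges of $G_k$, which are plentiful by the minimum-degree assumption, to build a single red tree of order at least $n-r$ that still meets every $G_i$.

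The hardest step, in my view, will be this amalgamation. One has to show simultaneously that a positive fraction of the cross-block edges of $G_k$ are red, that the red subtrees inside the blocks can be tied together without colour conflicts, and that the final tree contains at least one edge of each of $G_1,\ldots,G_k$. This is where the full package of hypotheses is used at once: $\delta>\tfrac{5}{6}$ for the Hajnal--Szemer\'edi input, $r=o(n)$ for the size slack, $k=o(\sqrt{n})$ to keep the ${\cal G}$-obligation under control, and the edge-count bound on $G_1,G_2,G_3$ to rule out pathological small-piece behaviour. The constant $N(k,\delta,r)$ in the statement is then chosen large enough that all the inevitable $o(n)$ error terms become negligible, completing the equality ${\rm ex}_{alt}={\rm ex}$ and hence the theorem.
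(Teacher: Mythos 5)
Your reduction is correct as far as it goes: by Lemma~\ref{ex} and the identity ${\rm cut}_{r+1}(G,{\cal G})=|E(G)|-{\rm ex}(G,{\cal G},{\cal T}_{n-r})$, the upper bound is free and the whole problem collapses to exhibiting one ordering $\sigma$ with ${\rm ex}_{alt}(G,{\cal G},{\cal T}_{n-r},\sigma)\leq{\rm ex}(G,{\cal G},{\cal T}_{n-r})$. Placing the edges of $G_1,\ldots,G_{k-1}$ first and then the edges of $G_k$ is also the paper's choice. Beyond that, the proposal diverges in ways that leave real gaps.

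First, you misread the role of the hypothesis $\sum_{i=1}^3|E(G_i)|\geq 3\binom{2k-3}{2}+4$. It is not there to avoid ``degenerate configurations in the alternating pattern''; it is used, via Lemma~\ref{lastlem} and Lemma~\ref{forestsubgraph}, to establish the \emph{${\cal G}$-forest property}: whenever a long alternating colouring has a connected monochromatic spanning ${\cal G}$-subgraph, that subgraph automatically contains a ${\cal G}$-forest (a rainbow-cycle exchange argument). This notion does not appear in your outline at all, yet the actual proof of Theorem~\ref{mainthm} is literally ``Lemma~\ref{lastlem} gives ${\cal G}$-forest property, then apply Lemma~\ref{main}.'' Without it you have no mechanism for producing a forest hitting every $G_i$ in a monochromatic component.

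Second, and more seriously, your Step I and your ``pump up'' argument are underdetermined in exactly the place where the paper does all the work. The paper does not cover $G_k$ by disjoint $K_6$'s and stack their edge sets at the end of $\sigma$. Instead it applies Hajnal--Szemer\'edi to cover $V(G)$ by $K_4$'s, extracts a monogamous $C_4$-packing, assembles for each vertex $v_i$ a private bouquet $H_i$ of $l=\Theta(r+k)$ four-cycles through $v_i$ via a Hall-type matching, carves out four extra Hamiltonian cycles and a near-perfect triangle packing from what remains of $G_k$, and only then orders $E(G_k)$ by a single global Eulerian tour visiting the $H_i$'s and triangles in a prescribed sequence. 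The Eulerian structure is what makes Lemma~\ref{orderinglemma} apply, giving the pointwise bound $\deg_{G^R}(v)\leq(\deg_G(v)+3k+4)/2$. That bound is indispensable: it is what turns ``few neutral edges'' into ``a colour class has a large connected component'' in Step~IV of the paper, and it is precisely the estimate your amalgamation step would have to supply but never does. Your ordering, with independent $K_6$-blocks rather than a single Eulerian tour, does not yield this degree bound, and so the pigeonhole claim that ``a positive fraction of the cross-block edges are red'' and that they can be stitched into a near-spanning tree is unsupported. You would also need something like Lemma~\ref{treesubgraph} to shrink a ${\cal G}$-tree down to exactly $n-r$ vertices while keeping it a ${\cal G}$-tree; that step is likewise absent. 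In short, the skeleton is right (invoke Lemma~\ref{ex}, build $\sigma$, force a monochromatic ${\cal G}$-tree), but the ordering you propose lacks the structural features that make the forcing argument go through, and the ${\cal G}$-forest machinery that the second hypothesis actually feeds is missing entirely.
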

The rest of the paper is devoted to the proof of the previous theorem. 
\begin{defin}{
Let $G$ be a graph and ${\cal G}=\{G_1,G_2,\ldots,G_k\}$ be a decomposition  of $G$, 
where $1\leq|E(G_1)|\leq |E(G_2)|\leq \cdots \leq |E(G_k)|$. 
For an ordering $\sigma \in S_{E(G)}$,
we say that $G$ has {\it $({\cal G},\sigma)$-forest property} if for any alternating $2$-coloring 
of $E(G)$  of length at least $|E(G)|-|E(G_1)|+1$ with respect to the ordering $\sigma$, 
the following property holds. 
If $G^R$ (resp.  $G^B$) is a connected spanning ${\cal G}$-subgraph, then it has a  ${\cal G}$-forest.
Moreover, $G$ has {\it ${\cal G}$-forest property} if for any ordering $\sigma \in S_{E(G)}$
in which the edges of each $G_i$ are consecutive, i.e., $\sigma=\sigma_1||\sigma_2||\cdots||\sigma_k$ and $\sigma_i \in S_{E(G_{j_i})}$ is an ordering of 
the edge set of $G_{j_i}$ for $1\leq i\leq k$,  
$G$ has $({\cal G},\sigma)$-forest property.
}\end{defin}
In what follows, we determine the chromatic number of some tree graph ${\rm KG}(G, {\cal G}, {\cal T}_t)$
provided that the graph $G$ has ${\cal G}$-forest property. Hence, it would be of interest to know when a graph satisfies 
this property. 

Assume that ${\cal G}=\{G_1,G_2,\ldots,G_k\}$ is a decomposition of a graph $G$. A cycle $C$ of $G$ is termed a {\it rainbow cycle}, if for any $1\leq i\leq k$, $|E(C)\cap E(G_i)|\leq~1$.
\begin{lem}\label{forestsubgraph}
Let $k$ be a positive integer. 
Assume that ${\cal G}=\{G_1,G_2,\ldots,G_k\}$ is a 
decomposition of a graph $G$. If for any rainbow cycle $C$ of $G$, one of the following conditions holds,  
\begin{enumerate}
\item[{\rm a)}] $\displaystyle\sum_{E(G_i)\cap E(C)\neq \varnothing}|E(G_i)|>  {2|E(C)|\over |E(C)|-1}\displaystyle
{2k-3\choose 2}+|E(C)|,$
\item[{\rm b)}] there exists an $1\leq i\leq k$ such that $E(G_i)\cap E(C)\not =\varnothing$ and that 
$|E(G_i)|-|E(G_1)|\geq  2\displaystyle {2k-3\choose 2}+1$, 
\end{enumerate}
then $G$ has ${\cal G}$-forest property. 
\end{lem}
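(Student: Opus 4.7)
The plan is a proof by contradiction grounded in Rado's theorem for the graphic matroid. Fix a valid ordering $\sigma$ and an alternating $2$-coloring of length at least $|E(G)| - |E(G_1)| + 1$; the total number of neutral edges is then at most $|E(G_1)| - 1$. Assume without loss of generality that $G^R$ is a connected spanning ${\cal G}$-subgraph, and set $H_i = E(G^R) \cap E(G_i)$. A ${\cal G}$-forest in $G^R$ is precisely a system of distinct representatives $e_i \in H_i$ that is independent in the graphic matroid of $G$. Suppose no such system exists. Rado's theorem then supplies a minimal nonempty $J \subseteq [k]$ with $r(H_J) < |J|$, where $H_J = \bigcup_{i \in J} H_i$; by minimality $r(H_J) = |J| - 1$, and since any two edges lying in different $H_i$'s are automatically independent in a simple graph, one checks $|J| \geq 3$.

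Two estimates are the backbone of the argument. First, alternation of colours inside each block of $\sigma$ together with the total neutral-edge budget gives $|H_i| \geq \lfloor (|E(G_i)| - n_i)/2 \rfloor \geq (|E(G_i)| - n_i - 1)/2$, where $n_i$ denotes the number of neutral edges inside the $G_i$-block. Second, since $H_J$ is a simple graph of rank $|J| - 1$, we have $|H_J| \leq \binom{|J|}{2}$. Summing the first bound over $i \in J$, using $\sum_i n_i \leq |E(G_1)| - 1$ and $|E(G_i)| \geq |E(G_1)|$, and combining with the second bound, yields the crucial a priori cap $|E(G_1)| \leq |J| + 1 \leq k + 1$. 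Without this automatic boundedness of $|E(G_1)|$ the rainbow-cycle hypotheses would supply no usable information, so this observation is essential.

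Now I produce a rainbow cycle. For any $i_0 \in J$, the minimality of $J$ means that $\{H_j : j \in J \setminus \{i_0\}\}$ satisfies Hall's condition in the graphic matroid, so Rado's theorem yields a transversal forest $F_{i_0}$ of size $|J| - 1$. For any $e \in H_{i_0}$, the set $F_{i_0} \cup \{e\}$ has $|J|$ edges living inside $H_J$, a rank-$(|J|-1)$ set, and is therefore dependent; it contains a unique cycle $C_e$ using $e$ from $G_{i_0}$ and one edge from each of some parts in $J \setminus \{i_0\}$. Thus $C_e$ is a rainbow cycle of $G$ of length $3 \leq m \leq |J|$ with $I_{C_e} \subseteq J$, and by hypothesis satisfies (a) or (b).

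In Case (b), some $i^* \in I_{C_e}$ has $|E(G_{i^*})| \geq |E(G_1)| + 2\binom{2k-3}{2} + 1$, whence the first estimate gives $|H_{i^*}| \geq \binom{2k-3}{2} + 1$. A direct verification shows $\binom{2k-3}{2} + 1 > \binom{k}{2} \geq \binom{|J|}{2}$ for $k \geq 3$, contradicting $|H_{i^*}| \leq |H_J| \leq \binom{|J|}{2}$. In Case (a), summing the first estimate over $i \in I_{C_e}$ and inserting (a) gives $\sum_{i \in I_{C_e}} |H_i| > \frac{m}{m-1}\binom{2k-3}{2} - (|E(G_1)| - 1)/2$; substituting $|E(G_1)| \leq |J| + 1$ reduces the desired inequality $\sum |H_i| > \binom{|J|}{2}$ to $2(2k-3)(k-2) \geq (|J|-1)|J|$, whose worst case $m = |J| = k$ is $3k^2 - 13k + 12 \geq 0$, true for all $k \geq 3$ (with equality at $k = 3$, where the strict $>$ in (a) closes the gap). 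The $G^B$ case is identical by symmetry. The main technical point is precisely the tight Case (a) inequality at $k = 3$: both the a priori cap on $|E(G_1)|$ and the strict inequality in (a) must be used in full, with no slack to spare.
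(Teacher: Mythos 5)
Your proof is correct but takes a genuinely different route from the paper. The paper argues by local exchange: it picks a spanning ${\cal G}$-subgraph $F$ of $G^R$ with exactly $k$ edges and minimum number of cycles, notes that $F$ has at most $2k-3$ non-isolated vertices (so at most $\binom{2k-3}{2}$ edges of $G$ lie entirely inside them), uses the alternation estimate together with (a) or (b) to find strictly more than $\binom{2k-3}{2}$ red edges in the parts met by a rainbow cycle of $F$, and then swaps one such edge, which must hit an isolated vertex of $F$, for the cycle edge of the same part, reducing the number of cycles — a contradiction. You instead invoke Rado's theorem for the graphic matroid: a minimal Hall violator $J$ satisfies $r(H_J)=|J|-1$, hence $|H_J|\leq\binom{|J|}{2}$, and feeding the alternation estimate into this rank bound yields the a priori cap $|E(G_1)|\leq|J|+1\leq k+1$, which is precisely what closes your case analysis. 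That extracted cap has no counterpart in the paper's proof; the paper instead bounds the neutral budget by $|E(G_{i_1})|-1$ (the smallest part touching the cycle) and uses $|E(G_{i_1})|\leq\frac{1}{t}\sum_j|E(G_{i_j})|$, which is exactly what the $\frac{2t}{t-1}$ factor in (a) is calibrated to absorb. Both arguments are tight at $k=3$; that is a feature of the hypotheses, not of the method. Your route brings in heavier machinery (Rado), but in return it cleanly isolates the $\binom{|J|}{2}$ bound and the $|E(G_1)|$ cap as structural facts, whereas the paper's exchange argument is more elementary and self-contained.
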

\begin{proof}{
Let $\sigma \in S_{E(G)}$ be a permutation
such that the edges of each $G_i$ are consecutive in $\sigma$. Also, consider an alternating $2$-coloring
of $E(G)$ of the length at least $|E(G)|-|E(G_1)|+1$ with respect to the ordering $\sigma$.
Without of loss of generality, suppose that $G^R$ is a connected spanning ${\cal G}$-subgraph. 
It is readily seen that the assertion holds for $k\leq 2$. Therefore, assume $k\geq 3$. 

Consider a spanning ${\cal G}$-subgraph $F$ of $G^R$ with $k$  edges and the minimum number of cycles. Note that some vertices of 
$F$ might be isolated.  
We show that $F$ is a spanning forest with $k$ edges. 
Since $F$ has exactly $k$ edges,  for any $1\leq j\leq k$, we have $|E(F)\cap E(G_{j})|=1$.
Assume that for $j=1,2,\ldots,k$, $E(F)\cap E(G_{j})=\{e_j\}$.  
For a contradiction, suppose that $C_t$ is a rainbow cycle of length $t$ of $F$. 
Also, suppose that  for any $1\leq j\leq t$, 
$E(C_t)\cap E(G_{i_j})\not=\varnothing$ where $|E(G_{i_1})|\leq |E(G_{i_2})|\leq \cdots \leq |E(G_{i_t})|$. 
The number of neutral edges of $G$ is at most $|E(G_1)|-1\leq |E(G_{i_1})|-1$. Now we show that if condition~(a) or~(b) 
for the rainbow cycle $C_t$ holds, then the number of red edges in  
$\bigcup_{j=1}^t E(G_{i_j})$ is greater than ${2k-3\choose 2}$. 
To see this, first suppose that condition~(a) for the 
rainbow cycle $C_t$ holds. 
Suppose that there are $n_j$ neutral edges in $E(G_{i_j})$, for $j=1,2,\ldots,t$. In view of the ordering of $\sigma$, one can see that the difference between the number of red edges and the number of blue edges of $G_{i_j}$ is at most one. Consequently, 
there are at least 
$$\sum_{j=1}^t{|E(G_{i_j})|-n_j-1\over 2}\geq \sum_{j=1}^t{|E(G_{i_j})|\over 2}-{|E(G_{i_1})|\over 2}-{t-1\over 2}> {2k-3\choose 2}$$
red edges in $\bigcup_{j=1}^t E(G_{i_j})$.\\
Now suppose that condition~(b) for the 
rainbow cycle $C_t$ holds. 
In this case, there is a $j\in\{1,2,\ldots,t\}$ such that 
there are at least $|E(G_{i_j})|-|E(G_1)|+1\geq 2\displaystyle {2k-3\choose 2}+2$ red and blue edges in $E(G_{i_j})$.
Accordingly, there are at least $\displaystyle {2k-3\choose 2}+1$ red edges in $G_{i_j}$.

The graph $F$ has at most $2k-3$ non-isolated vertices, and therefore, there exists a red edge $e\in \bigcup_{j=1}^t E(G_{i_j})$ incident with some isolated vertex of $F$. Assume that $e\in E(G_{i_l})$ and consider 
$F'=(F- \{e_{i_l}\})\cup \{e\}$. One can see that $F'$ is a ${\cal G}$-subgraph of $G^R$ with $k$ edges and it has 
fewer cycles than $F$, which is impossible.
}\end{proof}
If $k\leq 2$, then $G$ has no rainbow cycle. Hence, in view of the aforementioned lemma, the following lemma holds. 
\begin{lem}\label{lastlem}
Let $k$ be a positive integer. 
Assume that ${\cal G}=\{G_1,G_2,\ldots,G_k\}$ is a 
decomposition of a graph $G$. If at least one of the following conditions holds, then $G$ has ${\cal G}$-forest property
\begin{itemize}
\item $k\leq 2$, 

\item  $k=3$ and $|E(G_3)|\geq  |E(G_1)|+7$,

\item $\displaystyle\sum_{i=1}^{3}|E(G_i)|\geq  {3}
{2k-3\choose 2}+4$,

\item there is no rainbow cycle in $\displaystyle\bigcup_{i=1}^{k-1}G_i$ and $|E(G_k)|-|E(G_1)|\geq 2\displaystyle {2k-3\choose 2}+1$.
\end{itemize}
\end{lem}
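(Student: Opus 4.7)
The plan is to deduce Lemma~\ref{lastlem} directly from Lemma~\ref{forestsubgraph} by checking, in each of the four cases, that every rainbow cycle of $G$ satisfies condition~(a) or condition~(b) of that lemma. A rainbow cycle must have at least three edges, each in a distinct part $G_i$, so no rainbow cycle exists unless $k \geq 3$, and the length $t$ of any rainbow cycle lies in $\{3,4,\ldots,k\}$. This single observation already rules out case~1 ($k \leq 2$) vacuously.

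Cases~2 and~4 are essentially the same argument. In case~4 the hypothesis that $\bigcup_{i=1}^{k-1}G_i$ contains no rainbow cycle forces every rainbow cycle $C$ of $G$ to meet $E(G_k)$, so condition~(b) of Lemma~\ref{forestsubgraph} applied with $i = k$ is exactly the assumption $|E(G_k)| - |E(G_1)| \geq 2\binom{2k-3}{2} + 1$. In case~2 the bound $t \leq k = 3$ combined with $t \geq 3$ forces every rainbow cycle to use exactly one edge from each $G_i$, and in particular from $G_3$; since $7 = 2\binom{3}{2} + 1$, condition~(b) of Lemma~\ref{forestsubgraph} is satisfied with $i = 3$.

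Case~3 is the only one requiring a small arithmetic verification. Let $C$ be a rainbow cycle of length $t \geq 3$ whose edges lie in the distinct parts $G_{i_1}, \ldots, G_{i_t}$ with $i_1 < i_2 < \cdots < i_t$. Since $|E(G_1)| \leq |E(G_2)| \leq \cdots$ and $i_j \geq j$, we have $|E(G_{i_j})| \geq |E(G_j)|$; in particular $|E(G_{i_j})| \geq |E(G_3)|$ for $j \geq 3$. For $t = 3$ the hypothesis $\sum_{i=1}^{3}|E(G_i)| \geq 3\binom{2k-3}{2}+4$ at once beats the right-hand side $\frac{2\cdot 3}{3-1}\binom{2k-3}{2} + 3 = 3\binom{2k-3}{2}+3$ of condition~(a). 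For $t \geq 4$ I would first observe that the hypothesis together with $|E(G_1)|, |E(G_2)| \leq |E(G_3)|$ yields $|E(G_3)| \geq \binom{2k-3}{2} + 2$; plugging this into $\sum_{j=1}^{t}|E(G_{i_j})| \geq \sum_{j=1}^{3}|E(G_j)| + (t-3)|E(G_3)|$ produces a bound of the form $t\binom{2k-3}{2} + 2t - 2$, which a routine manipulation shows exceeds $\frac{2t}{t-1}\binom{2k-3}{2} + t$ for every $t \geq 4$, giving condition~(a) again.

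I do not expect any serious obstacle; the lemma is essentially a restatement of the cleaner but less concrete Lemma~\ref{forestsubgraph}. The only mild subtlety is in case~3 with $t \geq 4$, where one must notice that the extra terms $|E(G_{i_j})|$ for $j \geq 4$ (each at least $|E(G_3)|$) are exactly what is needed to absorb the additional $+t$ on the right-hand side of condition~(a); but this comes out of the hypothesis $\sum_{i=1}^{3}|E(G_i)| \geq 3\binom{2k-3}{2}+4$ without any further work.
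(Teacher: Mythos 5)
Your proposal is correct and follows the paper's own approach: the paper likewise derives Lemma~\ref{lastlem} as an immediate consequence of Lemma~\ref{forestsubgraph}, noting only that rainbow cycles have length between $3$ and $k$ (so the case $k\le 2$ is vacuous) and leaving the remaining verifications of conditions~(a) and~(b) to the reader. You have simply made explicit the case-by-case arithmetic that the paper omits, and your checks (in particular the $t\ge 4$ subcase of condition~(a), where $|E(G_3)|\ge\binom{2k-3}{2}+2$ is used to absorb the extra $+t$) are sound.
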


In the next lemma, we introduce a sufficient condition to extend a  ${\cal G}$-forest of a connected graph $G$ to a ${\cal G}$-tree such that the number of vertices of  ${\cal G}$-tree is sufficiently 
less than the number of vertices of $G$. 
\begin{lem}\label{treesubgraph}
Let $k$, $n$, and $r$ be positive integers, where $n\geq k+r+1$.
Assume that $G$ is a connected graph with $m$ vertices, where $n-r\leq m\leq n$. 
Also, let ${\cal G}=\{G_1,G_2,\ldots,G_k\}$ be a family of 
pairwise  edge-disjoint 
nonempty subgraphs of $G$. If $G$ has  a  ${\cal G}$-forest and $|E(G)|> {k(3k-2)(n-2)\over 2}+ (r+k-1)(n-1)$, then $G$ has a  ${\cal G}$-tree with $n-r$ vertices. 
\end{lem}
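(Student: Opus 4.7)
I would prove the lemma in two stages. First, I would show that $G$ contains some ${\cal G}$-tree with at most $n-r$ vertices. Then I would enlarge such a tree, one vertex at a time, until it has exactly $n-r$ vertices.

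The enlargement step is straightforward. Suppose $T$ is a ${\cal G}$-tree with $|V(T)| < n-r$; since $|V(T)| < n-r \le m$, we have $V(T) \subsetneq V(G)$, so by the connectedness of $G$ there is an edge $uv \in E(G)$ with $u \in V(T)$ and $v \notin V(T)$. Attaching $v$ and $uv$ to $T$ preserves both the tree structure and the property of containing an edge from each $G_i$, while increasing the vertex count by one. Iterating gives a ${\cal G}$-tree with exactly $n-r$ vertices.

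For the existence stage, I would begin with a ${\cal G}$-forest $F_0$ of minimum edge count. Since the subgraphs $G_1,\dots,G_k$ are pairwise edge-disjoint, any ${\cal G}$-forest must contain at least one edge from each $G_i$, so $|E(F_0)|=k$ and consequently $|V(F_0)|\le 2k$, distributed among some $c\le k$ components. I would then iteratively merge two components of the current ${\cal G}$-forest by inserting a shortest $G$-path between them, possibly bringing in new vertices at each step. After at most $k-1$ such mergers the ${\cal G}$-forest becomes connected, and any spanning tree of it is a ${\cal G}$-tree $T_0$.

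The principal obstacle is to argue that the merging phase consumes few enough vertices so that $|V(T_0)|\le n-r$, after which the enlargement stage can be invoked. This is precisely where the edge bound
\[
|E(G)| \;>\; \frac{k(3k-2)(n-2)}{2}+(r+k-1)(n-1)
\]
comes in. The plan is a counting argument by contradiction: if the merging phase were forced to use more than roughly $r$ new vertices outside $V(F_0)$, then a partition of $E(G)$ according to whether each edge lies inside the (at most $2k-2$)-vertex non-isolated core of the ${\cal G}$-forest, reaches outward to the at most $r+k-1$ extension vertices, or lies entirely in the complement, would bound $|E(G)|$ above by exactly the right-hand side of the hypothesis, yielding a contradiction. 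Once $|V(T_0)|\le n-r$ is established, the enlargement step above produces the required ${\cal G}$-tree on exactly $n-r$ vertices.
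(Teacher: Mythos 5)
Your enlargement step is correct and matches the paper's. The gap is in the existence stage, and it is substantial.

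You plan to build a ${\cal G}$-tree $T_0$ by merging the components of the $k$-edge forest $F_0$ via shortest $G$-paths, and then argue by a counting argument that the merging is ``forced'' to use at most about $r+k-1$ new vertices. But that bound is false, and the proposed counting argument cannot establish it. Take $G$ to consist of a clique on $n/2$ vertices together with a pendant path of length $n/2$ hanging off one clique vertex, with one $G_i$ being the last edge at the far end of the path and the remaining $G_j$'s sitting inside the clique. Then $|E(G)| = \Theta(n^2)$, comfortably exceeding the hypothesis, yet any ${\cal G}$-tree --- including the one produced by shortest-path merging --- must traverse the entire pendant path and therefore contains $\Theta(n)$ vertices. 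So the merging can legitimately bring in $\Theta(n)$ extension vertices while the edge hypothesis is easily satisfied; your partition of $E(G)$ into ``core'', ``outward'', and ``complement'' edges cannot bound $|E(G)|$ above by the right-hand side in this situation. Note also that you have set the wrong target: the goal is only $|V(T_0)| \le n-r$, not $|V(T_0)| = O(k+r)$. But even for this weaker target, shortest-path merging gives no guarantee on its own, and nothing in your sketch supplies one.

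The paper avoids all of this by an extremal/local-improvement argument rather than a greedy construction. Take a ${\cal G}$-tree $T$ with the minimum number of vertices and, among those, the maximum number of pendant vertices. If $T$ had more than $k$ pendants, one of them could be deleted while keeping $T$ a ${\cal G}$-tree, contradicting minimality; so $T$ has at most $k$ pendants, hence at most $k-2$ vertices of degree $\geq 3$ and maximum degree at most $k$ --- that is, $T$ is narrow. Now suppose for contradiction that $|V(T)| > n-r$, and let $W$ be the set of non-pendant vertices of $T$. Then $|V(G)\setminus W| \le r+k-1$, so the edge hypothesis forces $|E(G[W])| > \frac{k(3k-2)(n-2)}{2}$. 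By narrowness, for each $u\in W$ at most $k(3k-2)$ vertices of $W$ lie within $T$-distance $3k-2$ of $u$, so some edge $e\in E(G[W])$ joins two vertices at $T$-distance at least $3k-1$, creating in $T\cup\{e\}$ a cycle $C$ of length at least $3k$. Since $T\cup\{e\}$ has at most $k$ vertices of degree $\geq 3$, $C$ contains at least $k+1$ edges both of whose endpoints have degree $2$ in $T\cup\{e\}$ and which are not incident with $e$; by pigeonhole one such edge $f$ can be deleted while keeping every $G_i$ represented, and $T' = (T\cup\{e\}) - \{f\}$ is a ${\cal G}$-tree on the same vertex set with strictly more pendants, contradicting the choice of $T$. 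This pendant-maximization together with the long-chord swap is precisely the idea your proposal is missing; starting from an extremal $T$ rather than a greedily-merged one is what makes the edge count usable.
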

\begin{proof}{
Consider a spanning  ${\cal G}$-forest $F$. Note that some vertices might be isolated. 
Since $G$ is connected, we can add some edges to $F$ to obtain a spanning ${\cal G}$-tree.  
Now it is enough to show that $G$ contains a ${\cal G}$-tree $T$ with at most $n-r$ vertices.  
Note that if we prove this, then we can extend $T$ (if it is necessary) to a tree with exactly $n-r$ vertices, which implies the assertion. Consider all ${\cal G}$-trees with the minimum number of vertices. Among these ${\cal G}$-trees, 
let $T$ be a ${\cal G}$-tree with the maximum number of pendant vertices, i.e., the vertices of degree one. 
If $|V(T)|\leq n-r$, then there is nothing to prove.
Therefore, we  can assume that $|V(T)|>n-r$. Also, if $T$ has at least $k+1$ pendant vertices, 
then there is a pendant vertex $v$ such that $T - v$ is still a ${\cal G}$-tree 
with fewer vertices, which contradicts the minimality of $T$.
Hence, $T$ has at most $k$ pendant vertices. 
It is known that for any tree $T$, the number of pendant vertices is  
$$2+\sum_{\{v\in V(T):\ {\rm deg}_T(v)\geq 3\}}({\rm deg}_T(v)-2).$$ 
This implies that $T$ has at most $k-2$ vertices of degree more than $2$ and the maximum degree of $T$ is at most $k$.
Assume that $\{v_1,v_2,\ldots,v_t\}$ are the pendant vertices of $T$ where $2\leq t\leq k$. 
Set $W=V(T)\setminus \{v_1,v_2,\ldots,v_t\}$.
Since $T$ has at most $k$ pendent vertices, 
for any vertex $u\in W$ and any positive integer $i$, we have $|\{v: d_T(u,v)=i\}|\leq k$, where $d_T(u,v)$ denotes the distance 
between $u$ and $v$ in $T$. Therefore,
there are at most $k(3k-2)$  vertices $v\in W$ such that $1\leq d_T(u,v)\leq 3k-2$.
In view of the assumption and since $|W|\geq n-r-t+1\geq n-r-k+1$, one can check that the graph $G[W]$ has
at least ${k(3k-2)(n-2)+1\over 2}$ edges. 
Therefore, since $|W| \leq n-t \leq n-2$, there is an edge $e\in G[W]$
such that the unique cycle $C$ in $T\cup \{e\}$ has the length at least $3k$.
Since $T$ has at most $k-2$ vertices whose degrees are at least $3$, there are at least
$k+1$ edges in $C\setminus\{e\}$ such 
that the degree of any vertex of these edges in the graph $T\cup\{e\}$ is $2$, and also, these edges are~not incident with the edge $e$. 
Now, if there is an edge $f\in E(G)\setminus\bigcup_{i=1}^kE(G_i)$, then we set 
$T'=(T\cup\{e\})-\{f\}$. 
Otherwise, there are two edges $f$ and $f'$ among the aforementioned edges
such that $f,f'\in E(G_l)$ for some $l\in\{1,2,\ldots,k\}$. Now set $T'=(T\cup\{e\})-\{f\}$.
One can see that $T'$ is a ${\cal G}$-tree with the same vertex set as $T$. Also, the number of pendant vertices of $T'$
is more than that of $T$, which contradicts our assumption.
}\end{proof}
Assume that $G$ is a graph and $\sigma$ is an ordering of $E(G)$, i.e. $\sigma\in S_{E(G)}$. 
For a vertex $v\in V(G)$, a path $uvw$ is celled a {\it $(\sigma,v)$-consecutive path} 
if the edges $uv$ and $vw$ are consecutive in the ordering $\sigma$. 
In any alternating $2$-coloring of the edges of $G$ with respect to the ordering $\sigma$, we do~not assign the same color 
to the edges of any $(\sigma,v)$-consecutive path. 
This implies the next lemma.
\begin{lem}\label{consecutive}
Let $G$ be a graph, $v$ be a vertex of $G$, and $\sigma$ be an ordering of the edges of $G$. 
Consider an alternating $2$-coloring of the edges of $G$ with respect to the ordering $\sigma$ and 
let $E_v^R$ be a subset of $E(G)$ such that any red edge incident with $v$ is a member of $E_v^R$.  
If there are $t_1$ edge-disjoint $(\sigma,v)$-consecutive paths such that the number of 
these $(\sigma,v)$-consecutive paths  which have nonempty 
intersection with $E_v^R$ is at most $t_2$, 
then there are at least $t_1-t_2$ neutral edges incident with $v$ in $E(G)\setminus E_v^R$. 
\end{lem}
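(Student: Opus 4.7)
The plan is to isolate the sub-collection of $(\sigma,v)$-consecutive paths that avoid $E_v^R$, and to extract one neutral edge from each such path. By hypothesis, at most $t_2$ of the $t_1$ given edge-disjoint $(\sigma,v)$-consecutive paths meet $E_v^R$, so at least $t_1-t_2$ of them are disjoint from $E_v^R$. I would work exclusively with these and show that each contributes a neutral edge incident with $v$ that lies in $E(G)\setminus E_v^R$.

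First I would analyze a single such path $uvw$. Both of its edges $uv$ and $vw$ lie in $E(G)\setminus E_v^R$. Since by hypothesis every red edge incident with $v$ belongs to $E_v^R$, neither $uv$ nor $vw$ is red; thus each is either blue or neutral. Next I invoke the defining property of an alternating $2$-coloring with respect to $\sigma$: two consecutive colored edges must have distinct colors. As $uv$ and $vw$ are consecutive in $\sigma$ by the definition of a $(\sigma,v)$-consecutive path, they cannot both be blue. Combining these two observations forces at least one of $uv, vw$ to be neutral, yielding a neutral edge incident with $v$ that lies in $E(G)\setminus E_v^R$ and belongs to this particular path.

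Finally, since the $t_1$ paths (and in particular the $t_1-t_2$ selected ones) are edge-disjoint, the neutral edges produced by different paths are automatically distinct. Consequently, we obtain at least $t_1-t_2$ neutral edges incident with $v$ in $E(G)\setminus E_v^R$, as claimed. There is no real obstacle here; the lemma is a direct consequence of the alternating property together with the assumption that red edges at $v$ are confined to $E_v^R$, and the only thing one must be careful about is that the distinctness of the selected neutral edges is guaranteed by the edge-disjointness hypothesis and not by any further argument.
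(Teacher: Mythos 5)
Your proof is correct and fills in precisely the argument the paper leaves implicit: the paper simply notes before the lemma that no $(\sigma,v)$-consecutive path can be monochromatic, and treats the lemma as an immediate consequence. Your explicit accounting — selecting the $\geq t_1-t_2$ paths avoiding $E_v^R$, ruling out red at $v$ by the hypothesis on $E_v^R$, using the alternating property to rule out two blues, and invoking edge-disjointness for distinctness — is exactly the intended reasoning, just written out.
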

Suppose that $G$ is a subgraph of an Eulerian graph $H$. An ordering $\sigma$ of $E(G)$ is induced by an Eulerian tour of $H$, if 
the edges of $G$ are ordered corresponding to their ordering in the Eulerian tour of $H$, i.e.,
if we traverse the edge $e$ before the edge $e'$ in the Eulerian tour of $H$, then in the ordering $\sigma$ we have $e < e'$. 

\begin{lem}\label{orderinglemma}
Let $H$ be an Eulerian graph and $G$ be a subgraph of $H$. 
If $\sigma$ is an ordering of the edge set of $G$ induced by an Eulerian 
tour of $H$, then in any alternation $2$-coloring of $E(G)$ with respect to the ordering $\sigma$, the number of red {\rm(}resp. blue{\rm)} 
edges incident with a vertex $v$ of $G$ is at most ${{\rm deg}_H(v)+2\over 2}$. 
\end{lem}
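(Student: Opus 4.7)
The plan is to use the Eulerian tour of $H$ inducing $\sigma$ to partition the edges of $H$ at $v$ into $\tfrac{1}{2}{\rm deg}_H(v)$ pairs of tour-consecutive edges, and then apply the alternation constraint within each pair. Explicitly, fix an Eulerian tour $T=(v_0,e_1,v_1,e_2,\ldots,e_m,v_m)$ of $H$ that induces $\sigma$. For every index $i$ with $0<i<m$ and $v_i=v$, the edges $e_i,e_{i+1}$ are both incident with $v$ and are immediately consecutive in $T$. These pairs exhaust the edges of $H$ at $v$ with exactly one possible exception: if $v_0=v_m=v$, then $e_1$ and $e_m$ form a single ``split'' pair whose two members sit at the extremes of $\sigma$ rather than consecutively. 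At most one pair is split; all other pairs are non-split.

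The key observation is that if $\{e_i,e_{i+1}\}$ is a non-split pair and both of its edges lie in $G$, then no edge of $H$ (and in particular no edge of $G$) lies between them in tour order, so they are consecutive in $\sigma$. The alternating-coloring hypothesis then forces at most one of them to be red; if only one edge of the pair lies in $G$, the bound is trivial. So each non-split pair contributes at most one red $G$-edge incident with $v$, while the split pair contributes trivially at most two. Summing yields at most $\bigl(\tfrac{1}{2}{\rm deg}_H(v)-1\bigr)\cdot 1+2=\tfrac{{\rm deg}_H(v)+2}{2}$ red edges at $v$ when the split pair exists, and at most $\tfrac{1}{2}{\rm deg}_H(v)\le\tfrac{{\rm deg}_H(v)+2}{2}$ otherwise. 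The blue bound is identical by symmetry of the alternating-coloring definition.

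The main (and essentially only) subtle point is identifying the at-most-one pair that the cyclic-to-linear conversion of the tour breaks at a given vertex; this is precisely where the extra $+2$ in the bound $({\rm deg}_H(v)+2)/2$ comes from. Everything else reduces to directly reading off the alternating-coloring constraint on $\sigma$-consecutive edge pairs, so I expect no substantive obstacle.
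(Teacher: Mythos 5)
Your proof is correct and uses essentially the same idea as the paper: exploit the Eulerian tour of $H$ to decompose the edges of $H$ at $v$ into tour-consecutive pairs (one pair possibly split when $v=v_0=v_m$), and observe that any pair lying entirely in $G$ is $\sigma$-consecutive and hence contains at most one red edge. The paper phrases this by lower-bounding the number of $(\sigma,v)$-consecutive paths (equivalently, non-red edges at $v$), whereas you bound the red edges pair by pair, but the underlying pairing and the source of the $+2$ slack are identical.
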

\begin{proof}{
We show that for each vertex $v\in V(G)$, we have ${\rm deg}_{G^R}(v)\leq {{\rm deg}_H(v)+2\over 2}$, and similarly, the assertion holds for the blue spanning subgraph $G^B$. 
In view of the definition of alternating coloring, we do~not assign the same color to the edges of any 
$(\sigma,v)$-consecutive path. One can check that in the ordering $\sigma$, the number of 
$(\sigma,v)$-consecutive paths in $G$ is at least ${2{\rm deg}_G(v)-{\rm deg}_H(v)-2\over 2}$. 
Note that if the vertex $v$ is~not the beginning vertex of the Eulerian tour, then this number is at least 
${2{\rm deg}_G(v)-{\rm deg}_H(v)\over 2}$.
This implies that there are at least  ${2{\rm deg}_G(v)-{\rm deg}_H(v)-2\over 2}$ edges incident with $v$ 
which are~not red. Hence, 
the number of red edges incident with the vertex $v$ is at most ${{\rm deg}_H(v)+2\over 2}$. 
}\end{proof}
Now we are in a position to prove the main lemma.
\begin{lem}\label{main}
Let $k, n$, and $r$ be positive integers and $\delta $ be a real number, where 
${5\over 6}< \delta < 1$, $k=o(\sqrt{n})$, 
and $r=o(n)$. There is a constant integer
$N=N(k,\delta,r)$ such that for any $n\geq N$ we have the following. 
Assume that $G$ is a graph with $n\geq N$ vertices and ${\cal G}=\{G_1,G_2,\ldots,G_k\}$ is a 
decomposition of $G$, 
where $|E(G_k)|\geq |E(G_{k-1})|\geq \cdots\geq |E(G_1)|\geq 1$. If 
the following conditions hold
\begin{itemize}
\item $G$ has ${\cal G}$-forest property, 
\item $G_k$ is a spanning subgraph of $G$ with $\delta(G_k)\geq \delta n$, 
\end{itemize}
then
$$\chi({\rm KG}(G, {\cal G}, {\cal T}_{ n-r}))={\rm cut}_{r+1}(G, {\cal G}).$$
\end{lem}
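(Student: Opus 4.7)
The plan is to upgrade the lower bound of Lemma~\ref{ex} so that it matches its upper bound. The upper bound $\chi({\rm KG}(G,{\cal G},{\cal T}_{n-r})) \le {\rm cut}_{r+1}(G,{\cal G})$ is immediate from Lemma~\ref{ex} together with the definition ${\rm cut}_{r+1}(G,{\cal G}) = |E(G)|-{\rm ex}(G,{\cal G},{\cal T}_{n-r})$. For the matching lower bound, again by Lemma~\ref{ex}, it suffices to exhibit a single ordering $\sigma$ of $E(G)$ with ${\rm ex}_{alt}(G,{\cal G},{\cal T}_{n-r},\sigma) \le {\rm ex}(G,{\cal G},{\cal T}_{n-r})$: the task reduces to extracting a monochromatic ${\cal G}$-tree on $n-r$ vertices from any alternating $2$-coloring of $E(G)$ along $\sigma$ whose length exceeds ${\rm ex}(G,{\cal G},{\cal T}_{n-r})$.

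A preliminary numerical observation drives the choice of $\sigma$. Since the spanning subgraph $G \setminus G_1$ contains no ${\cal G}$-subgraph at all, ${\rm ex}(G,{\cal G},{\cal T}_{n-r}) \ge |E(G)| - |E(G_1)|$, i.e.\ ${\rm cut}_{r+1}(G,{\cal G}) \le |E(G_1)|$. Hence whenever the coloring has length $L > {\rm ex}(G,{\cal G},{\cal T}_{n-r})$, the neutral-edge set $N$ satisfies $|N| \le |E(G_1)|-1$ and $L \ge |E(G)|-|E(G_1)|+1$---exactly the threshold triggering the ${\cal G}$-forest property. I therefore take $\sigma = \sigma_1\|\sigma_2\|\cdots\|\sigma_k$ with $\sigma_i$ an ordering of $E(G_i)$. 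In the dominant block $\sigma_k$, I further arrange that $\sigma_k$ is induced by an Eulerian tour of an Eulerian supergraph $H$ of $G_k$ with $\deg_H(v) = \deg_{G_k}(v) + O(1)$. Such an $H$ is built from $G_k$ by pairing up its odd-degree vertices and routing extra edges through small Eulerian substructures; the near-perfect $K_5$-tiling of $G_k$ supplied by Theorem~\ref{Hajnal} (applicable since $\delta(G_k) \ge \delta n > 5n/6 > 4n/5$ for $n$ large) provides the room.

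Analysis. Because $\sigma_k$ is contiguous in $\sigma$, the restriction of the coloring to $E(G_k)$ remains alternating with respect to $\sigma_k$, so Lemma~\ref{orderinglemma} yields
$$\deg_{G^R \cap G_k}(v),\ \deg_{G^B \cap G_k}(v)\ \le\ \tfrac{\deg_H(v)+2}{2} = \tfrac{\deg_{G_k}(v)}{2}+O(1).$$
Substituting into $\deg_{G^R \cap G_k}(v)+\deg_{G^B \cap G_k}(v)+\deg_N(v) = \deg_{G_k}(v) \ge \delta n$ gives $\deg_{G^R}(v),\deg_{G^B}(v) \ge \tfrac{\delta n}{2}-O(1)-\deg_N(v)$. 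Since $\sum_v \deg_N(v) = 2|N| < 2|E(G_1)|$, only a controlled set of vertices can carry $\deg_N(v)$ close to $n$, and the density $\delta > 5/6$ of $G_k$ propagates the abundant local red and blue degrees into global connectivity of both $G^R$ and $G^B$. The ${\cal G}$-subgraph condition is automatic from block-level alternation (both colors meet $E(G_i)$ as soon as at least two edges of $G_i$ are colored; rare exceptions are dispatched by a single-edge color swap perturbing $L$ by at most one). The ${\cal G}$-forest property then supplies a ${\cal G}$-forest inside, say, $G^R$, and since $|E(G^R)| \ge (L-1)/2 \ge \tfrac{|E(G)|-|E(G_1)|}{2} \ge \tfrac{|E(G_k)|}{2} = \Omega(n^2)$ while the edge-count threshold $\tfrac{k(3k-2)(n-2)}{2} + (r+k-1)(n-1)$ of Lemma~\ref{treesubgraph} is only $o(n^2)$ under $k=o(\sqrt n)$ and $r=o(n)$, Lemma~\ref{treesubgraph} extends this forest to a ${\cal G}$-tree on exactly $n-r$ vertices inside $G^R$, providing the required monochromatic forbidden structure.

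The main obstacle I foresee is the connectivity step: Lemma~\ref{orderinglemma} only gives upper bounds on monochromatic degrees, so turning these into reliable lower bounds on $\deg_{G^R}(v)$ and $\deg_{G^B}(v)$ requires careful counting of how many vertices can simultaneously carry heavy neutral degree, together with the $O(1)$ parity correction needed to embed $G_k$ into its Eulerian supergraph $H$. Once these two technical points are pinned down, the remaining bookkeeping---verifying the ${\cal G}$-subgraph condition, invoking the ${\cal G}$-forest property, and extending the resulting ${\cal G}$-forest via Lemma~\ref{treesubgraph}---is routine.
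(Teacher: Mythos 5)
Your high-level picture matches the paper's: reduce to exhibiting a single ordering $\sigma$ with $\mathrm{ex}_{alt}\le\mathrm{ex}$, note that $\mathrm{cut}_{r+1}(G,\mathcal{G})\le|E(G_1)|$ so the length threshold activates the $\mathcal{G}$-forest property, take $\sigma=\sigma_1\|\cdots\|\sigma_k$ with $\sigma_k$ Eulerian-induced, use Lemma~\ref{orderinglemma} for monochromatic degree bounds, and finish with Lemma~\ref{treesubgraph}. The flipped use of Lemma~\ref{orderinglemma} to get a \emph{lower} bound $\deg_{G^R}(v)\ge\delta n/2-O(1)-\deg_N(v)$ is also a correct and relevant observation.

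But there is a real gap where you write that ``the density $\delta>5/6$ of $G_k$ propagates the abundant local red and blue degrees into global connectivity of both $G^R$ and $G^B$.'' This does not follow, and in fact it is false in the generality claimed: a minimum degree of $\delta n/2-o(n)>5n/12$ in a graph on $n$ vertices does not preclude two roughly balanced components (two components of size $\sim 5n/12$ each fit comfortably inside $n$ vertices), and the paper does not claim both colors are connected. What the paper actually proves (Step~IV) is that \emph{one} of $G^R,G^B$ has a component on at least $n-r$ vertices, by contradiction: assuming both colors have a balanced or small cut forces too many neutral edges. Ruling out the balanced case (Case~I) needs the $3$-block decomposition of the leftover part of $G_k$ so that ``unusual'' edges can be counted; ruling out the small-side case (Case~II) needs the per-vertex $4$-block subgraphs $H_i$ (built from the Hajnal--Szemer\'edi packing) plus the consecutive-path lemma to force $\Omega(l)$ neutral edges near every vertex on the small side. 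Your sketch of $\sigma$ --- ``pairing odd-degree vertices and routing through small Eulerian substructures'' --- omits precisely the $H_i$'s, the $3$-blocks, and the four Hamiltonian cycles whose existence is the reason the counting in Step~IV goes through. You correctly flag the connectivity step as the obstacle, but then declare it ``routine''; it is the core of the proof and is not supplied.

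Two smaller issues: (i) the ``single-edge color swap'' to dispatch the case where some $G_i$ is monochromatic is not legitimate --- you must handle the given coloring, not a perturbed one. The paper's Step~III instead shows that if, say, $G^B$ misses some $G_{i_0}$, then the neutral edges are confined to $G_{i_0}$, forcing the auxiliary cycle $C_n''$ entirely red and hence $G^R$ connected and spanning, after which the forest property applies directly. (ii) The $\mathcal{G}$-forest property is triggered only once you know the relevant monochromatic subgraph is a \emph{connected spanning} $\mathcal{G}$-subgraph; asserting the forest before establishing connectivity puts the cart before the horse. Until Step~IV is actually carried out (with the supporting ordering structure), the argument does not close.
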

\begin{proof}{
Assume that $n\geq N$, where $N=N(k,\delta,r)$ is a constant and will be determined during the proof. 
Note that if there is some $i\in\{1,2,\ldots,k-1\}$ such that $|E(G_i)|=1$, then we have
$\chi({\rm KG}(G, {\cal G}, {\cal T}_{n-r}))\leq 1$ and
${\rm ex}(G, {\cal G}, {\cal T}_{n-r})\geq |E(G)|-1$. Now it is easy to see that 
$\chi({\rm KG}(G, {\cal G}, {\cal T}_{ n-r}))={\rm cut}_{r+1}(G, {\cal G})$. 
Therefore, we can assume that for any $i\in\{1,2,\ldots,k-1\}$, we have $|E(G_i)|\geq 2$.
In view of Lemma~\ref{ex}, it is enough to show that for large enough $N$, 
${\rm ex}_{alt}(G,{\cal G}, {\cal T}_{n-r})\leq |E(G)|-{\rm cut}_{r+1}(G,{\cal G})={\rm ex}(G,{\cal G},{\cal T}_{n-r})$, which 
is equivalent to the following claim.\\ 

{\bf Main Claim:} There is an ordering $\sigma\in S_{E(G)}$ such that for any alternating $2$-coloring of $E(G)$ of length ${\rm ex}(G,{\cal G},{\cal T}_{n-r})+1$ with respect to the ordering $\sigma$,  
$G^R$ or $G^B$ contains a ${\cal G}$-tree with $n-r$ vertices. 

The proof will be divided into $5$ steps as follows. In the first step, we introduce the ordering $\sigma$. In the second step, 
we consider an alternating $2$-coloring for the edge set of $G$ of length ${\rm ex}(G,{\cal G},{\cal T}_{n-r}, \sigma)+1$ with respect to the ordering $\sigma$. 
Furthermore, we present some upper bounds for  
the $r^{th}$ cut number of $G$ and the maximum degrees of $G^R$ and $G^B$. 
In the third step, we show that if $G^R$ or $G^B$ is~not a ${\cal G}$-subgraph, then the main claim follows. 
In the fourth step, we claim that $G^R$ or $G^B$ has a 
large connected component and we prove this claim by two cases. Finally, in the last step, we show  
the largest connected component of $G^R$ or $G^B$ contains a  ${\cal G}$-tree with $n-r$ vertices, and consequently, 
the main claim holds.\\

{\bf Step I:} An ordering for the edge set of $G$\\ 
For any $i\in\{1,2,\ldots,k-1\}$, if $G_i$ has some odd degree vertices, then add a new vertex
$z_i$ to $G_i$ and join it to all odd degree vertices in $G_i$. Otherwise, consider $G_i$ itself. Suppose that $G_i^1,\ldots,G_i^{t_i}$ are the connected component of 
the resulting graph. For any $1\leq j\leq t_i$, consider an Eulerian tour for $G_i^j$ and define $\sigma_i^j$
to be an ordering of $E(G_i^j)\cap E(G_i)$
such that these edges are ordered corresponding to their ordering in the aforementioned 
Eulerian tour of $G_i^j$, i.e., if the edge $e_i$ is traversed before the edge $e_j$, then $e_i<e_j$. Define the 
ordering $\sigma_i$ for the edge set of $G_i$ as follows 
$$\sigma_i=\sigma_i^1||\sigma_i^2||\cdots||\sigma_i^{t_i}.$$ 

Let $V(G_k)=V(G)=\{v_1,v_2,\ldots,v_n\}$.  
Here, we want to present an ordering for the edges of $G_k$.
Set $l=\lceil {26r+26\over \delta}+{3k\over 2}\rceil=o(n)$ and  
assume that $n=4a+b$, where $a$ and $b$ are positive integers and $1 \leq b\leq 4$. 
Join $4-b$ new vertices to the graph $G_k$ to obtain the graph $G_k'=G_k\vee K_{4-b}$. 
Let $F$ be a graph with exactly $a+1$ connected components such that each connected component of 
$F$ is isomorphic to the complete graph $K_4$. 
By Theorem \ref{Hajnal}, the $F$-packing number of $G_k'$ is at least ${1\over 36}n$, and consequently, 
if $N$ is large enough, then it is more than 
$4l+2$. 
Assume that the set $\{F_1,F_2,\ldots,F_{4l+3}\}$ forms an $F$-packing of $G_k'$.  
For any $i=1,2,\ldots,4l+3$ 
and for any connected component of $F_i$, which is a subgraph of $G$, choose a  four cycle $C_4$  of this component. 
One can see that every vertex of $G_k$ appears in at least $4l$ and at most $4l+3$ of these $C_4$'s. 
Clearly, these $C_4$'s form a monogamous $C_4$-packing of $G_k$. 

Call every $C_4$ of this packing a {\it $4$-block}.
Construct a bipartite graph with the vertex set $W\cup W'$ such that $W$ consists of
$l$ copies of each vertex in $V(G)$
and $W'$ consists of all $4$-blocks. 
Join a vertex of $v\in W$ to a vertex $u\in W'$, if the corresponding vertex of $v$ in 
$V(G)$ is contained in the corresponding $4$-block of $u$. 
One can check that the degree of every vertex in the part $W$ is   at least $4l$ and also the degree of every vertex in the part $W'$ is exactly $4l$. 
In view of Hall's Theorem, one can see that this bipartite graph has a matching, which saturates all vertices of $W$. 
For any 
vertex $v_i\in V(G)$, set $A_i$ to be the set of all $4$-blocks assigned to all copies of $v_i$ through the aforementioned matching.
Note that $|A_i|=l$ and since $4$-blocks form a monogamous $C_4$-packing, 
the intersection of any two $4$-blocks in $A_i$ is $v_i$.  
For each $i\in[n]$, set $H_i$ to be a subgraph of $G_k$ consisting of all $4$-blocks in $A_i$.
One can see that 
$H_1,\ H_2,\ldots,\ H_n$ are pairwise edge-disjoint subgraphs of $G_k$ 
and for each $i\in [n]$, $v_i\in V(H_i)$. Also, every vertex of $V(H_i)\setminus \{v_i\}$ has degree $2$ in $H_i$. 

In view of the aforementioned $F$-packing, one can check that the degree of every vertex of $G_k \setminus\displaystyle\cup_{i=1}^n H_i$ is 
at least $\delta n-8l-6$. Now by a well-known result of Dirac, one can check that for large enough $N$, the graph 
$G_k \setminus\displaystyle\cup_{i=1}^n H_i$ contains four edge-disjoint Hamiltonian cycles $C_n, C_n', C_n''$ and $C_n'''$. 
Without loss of generality, we can assume that $E(C_n)=\{v_iv_{i+1}: 1\leq i\leq n-1\}\cup\{v_1v_n\}$.
If $G_k$ has no odd degree vertices, then set $M=\varnothing$. 
Otherwise, consider a family $M$ of vertex-disjoint paths in $C_n'$ such that every odd degree vertex of $G_k$ is an end point of these paths and also the end points of these paths have odd degree in $G_k$. 

By a result of Huu Hoi~(see page 8 in \cite{HHHoi}), there are at least 
\begin{equation}\label{numbertriangle}
{\epsilon\over 3\nu(\nu-2)}(4\epsilon-\nu^2)-O(\nu)
\end{equation}
edge-disjoint triangles in a graph with $\nu$ vertices and $\epsilon$ edges. Consider the spanning subgraph $L$ of $G_k$ whose edge set is  
$E(M)\cup E(C_n) \cup E(C_n'')\cup E(C_n''')\cup(\displaystyle\bigcup_{i=1}^nE(H_i))$.  
Define $y=|E(L)|$. One can see that $|E(L)|\leq (4l+4)n$. In view of~(\ref{numbertriangle}), consider at least 
\begin{equation}\label{numbertriangle2}
{|E(G_k)|-y\over 3n(n-2)}(4|E(G_k)|-4y-n^2)-O(n)=
{|E(G_k)|\over 3n(n-2)}(4|E(G_k)|-n^2)-o(n^2)\end{equation}
edge-disjoint triangles in $G_k\setminus L$. 


Call each of these triangles  a $3$-block.
Let $G_k''$ be the graph obtained from $G_k$ by removing all edges in $L$ and
the aforementioned  $3$-blocks. Every vertex of $G_k''$ has an even degree, and therefore, every connected component of $G_k''$ is an Eulerian graph.
Assume that $Q_1, Q_2, \ldots, Q_s$ are connected components of $G_k''$. 

We construct an Eulerian tour for $G_k\setminus (M\cup C_n''\cup C_n''')$ 
by the following algorithm and using it we present
an ordering $\sigma_k$ for the edge set of $G_k$.
At the $i^{th}$ stage, where $1\leq i \leq n$, do the following steps
\begin{itemize}
\item Traverse an Eulerian tour of $H_i$ started at $v_i$.
\item Traverse every $3$-block containing $v_i$ which is still untraversed.
\item If there is a $j\in[s]$ such that $v_i\in Q_j$ and the edge set of $Q_j$ is still untraversed, then consider 
an Eulerian tour for $Q_j$ starting at $v_i$ and traverse it.
\item Traverse the the edge $v_iv_{i+1}$ (the indices are taken modulo $n$).
\item If $i<n$, then start the $(i+1)^{th}$ stage. 
\end{itemize}

Assume that $E(C_n'')=\{f_1,f_2,\ldots,f_n\}$ and $E(C_n''')=\{f'_1,f'_2,\ldots,f'_n\}$ such that
for each $i\in\{1,2,\ldots,n\}$, two edges $f_i$ and $f_{i+1}$ (resp. $f'_i$ and $f'_{i+1}$)
are incident (the indices are taken modulo $n$).  
Construct an ordering $\pi$ for the edge set of the graph $G_k\setminus (M\cup C_n''\cup C_n''')$
such that the edges of $G_k\setminus (M\cup C_n''\cup C_n''')$ are ordered 
corresponding to their ordering in the aforementioned Eulerian tour, i.e.,
if we traverse the edge $e$ before the edge 
$e'$ in the Eulerian tour, then in the ordering $\pi$, we have $e<e'$.
Construct the ordering $\sigma_k$ from $\pi$ by putting the edges of $M$ at the end of the ordering $\pi$  such that any two incident edges in $M$ are consecutive.
Set 
$$\sigma=\sigma_1||\sigma_2||\cdots||\sigma_k||f_1<f'_1<f_2<f'_2<\ldots<f_n<f'_n.$$

{\bf Step II:} The $r^{th}$ cut number of $G$ and the maximum degrees of $G^R$ and $G^B$\\
Consider an alternating $2$-coloring of edges of $G$ with respect to the ordering $\sigma$ of length 
${\rm ex}(G,{\cal G},{\cal T}_{n-r})+1$. 
In view of the definition of ${\rm ex}(G, {\cal G}, {\cal F})$, we have 
$${\rm ex}(G, {\cal G}, {\cal T}_{n-r})\geq \sum_{j=1}^k|E(G_j)|-|E(G_1)|=|E(G)|-|E(G_1)|.$$
Also, in the aforementioned alternating $2$-coloring of $E(G)$, the number of neutral edges is equal to 
$${\rm cut}_{r+1}(G)-1=|E(G)|-{\rm ex}(G, {\cal G}, {\cal T}_{n-r})-1\leq |E(G_1)|-1.$$
Also, one can check that 
$${\rm cut}_{r+1}(G,{\cal G})\leq \min\{|E_G(S,V(G)\setminus S)|:\ S\subseteq V(G),\ \ |S|\geq r+1,\ |V(G)\setminus S|\geq r+1 \}.$$
Therefore, 
$${\rm cut}_{r+1}(G,{\cal G})\leq \min(\displaystyle\{|E(G_1)|,  (r+1)(n-r-1)\})=o(n^2).$$

In view of the ordering $\sigma$ and Lemma~\ref{orderinglemma}, one can check that 
for any vertex $v\in V(G)$, since $G_k\setminus (M\cup C_n''\cup C_n''')$ is an Eulerian graph,
the number of red (resp. blue) edges incident with $v$ in $G_k\setminus (M\cup C_n''\cup C_n''')$ is at most 
${({\rm deg}_{G_k}(v)-{\rm deg}_{M}(v)-4)+2\over 2}$.
According to the ordering of the edges of $M\cup C_n''\cup C_n'''$ in $\sigma$,
all edges of $C_n''\cup C_n'''$  incident with $v$  and at most one edge of $M$ incident with $v$  (if there is such an edge) might be red (resp. blue). 
Consequently, since ${\rm deg}_{M}(v)\in\{0,1,2\}$, 
\begin{equation}\label{degGk}
\max\{{\rm deg}_{G_k^R}(v), {\rm deg}_{G_k^B}(v)\}\leq{{\rm deg}_{G_k}(v)+7\over 2}. 
\end{equation} 

Also, in view of Lemma~\ref{orderinglemma}, the number of red edges incident with $v$ in $G_i$, for $1\leq i\leq k-1$,  is at most 
${{\rm deg}_{G_i}(v)+3\over 2}$. Hence, the degree of $v$ in $G^R$ is at most $ {{\rm deg}_{G_k}(v)+7\over 2}+\displaystyle\sum_{i=1}^{k-1}{{\rm deg}_{G_i}(v)+3\over 2}$. Similarly, one can obtain the same bound for the blue subgraph $G^B$, and therefore, 
\begin{equation}\label{degree}
\max\{{\rm deg}_{G^R}(v), {\rm deg}_{G^B}(v)\}\leq  {{\rm deg}_G(v)+3k+4\over 2}.
\end{equation}\\

{\bf Step III:} Both of $G^R$ and $G^B$ are ${\cal G}$-subgraphs.\\
In this step, we show that if $G^R$ or $G^B$ is~not a ${\cal G}$-subgraph, then the main claim follows. 
First, assume that there exists an $i_0\in\{1,2,\ldots,k\}$ such that $G_{i_0}$ has no blue 
edge, i.e., $G^B$ is~not a ${\cal G}$-subgraph. 
Since there are at most $|E(G_1)|-1$ neutral edges, hence, in view of the ordering of the edge set of $G_{i_0}$, there are   
exactly $|E(G_{i_0})|-1$ neutral edges in $G_{i_0}$ and that $|E(G_{i_0})|=|E(G_{1})|$. In particular, $G_{i_0}$ contains exactly 
one red edge. This also 
implies that there is no neutral edges in $E(G)\setminus E(G_{i_0})$. 
Therefore,  either $C_n''$ or $C_n'''$ has no blue edge.
Without loss of generality, assume that all edges of $C_n''$ are red.  Also, 
note that for any 
$i\in\{1,2,\ldots,k\}\setminus\{i_0\}$, $G_i$ has at least two edges and in view of the ordering of its edges, $G_i$ has at least a red edge. 
For any $i\in\{1,2,\ldots,k\}$, let $g_i$ be a red edge in $G_i$. 
Now consider the red subgraph $C_n''\cup\{g_1,g_2,\ldots,g_{k}\}$.
Clearly, this subgraph is a connected spanning subgraph of $G^R$, and 
consequently, $G^R$ is a connected spanning graph, which intersects all $G_i$'s for $i=1,2,\ldots,k$. 
Since $G$ has ${\cal G}$-forest  property, one can conclude that $G^R$ contains a  ${\cal G}$-forest. 
Also, since the number of neutral edges is at most ${\rm cut}_{r+1}(G,{\cal G})-1$, 
one can see that 
\begin{equation}\label{coloredges}
\min\{|E(G_k^R)|, |E(G_k^B)|\}\geq {|E(G_k)|-{\rm cut}_{r+1}(G,{\cal G})\over 2}> {5\over 24}n^2-{1\over 2}(r+1)(n-r-1).
\end{equation}
Also, for large enough $N$, 
$${5\over 24}n^2-{1\over 2}(r+1)(n-r-1)> {k(3k-2)(n-2)\over 2}+ (r+k-1)(n-1).$$
Hence, in view of Lemma~\ref{treesubgraph}, $G^R$ contains a ${\cal G}$-tree, which is a member of ${\cal T}_{n-r}$. 
This implies the main claim. Similarly, if there exists an  $i_0\in\{1,2,\ldots,k\}$ such that $G_{i_0}$ has no red 
edge, then the main claim holds. 
Thus we can suppose that for any $i\in\{1,2,\ldots,k\}$, both of colors appear in $E(G_i)$, i.e., 
both of $G^R$ and $G^B$ are ${\cal G}$-subgraphs.\\

{\bf Step IV (Claim):} $G^R$ or $G^B$ has a connected component with at least $n-r$ vertices.\\
One the contrary, suppose that every connected component of $G^R$ and also $G^B$ has at most $n-r-1$ vertices. 
Assume that $\Omega_R$ (resp. $\Omega_B$) is the largest connected component of $G^R$ (resp. $G^B$). 
In view of the assumption, we have $\max\{|V(\Omega_R)|, |V(\Omega_B)|\}\leq n-r-1$. 
Therefore, in view of (\ref{coloredges}), 
the number of red and blue edges in $G_k$ is $O(n^2)$, i.e., $|E(G^R)|=O(n^2)$ and $|E(G^B)|=O(n^2)$. 
Since $r=o(\sqrt n)$, for large enough $N$, we have $\min\{|V(\Omega_R)|, |V(\Omega_B)|\}\geq r+1$. 
Let $H_R$ be the bipartite subgraph of $G$ with the vertex set $(U_R,V_R)$,
where $\{U_R, V_R\}=\{V(\Omega_R), V(G) \setminus V(\Omega_R)\}$ and
$|U_R|\leq|V_R|$. 
A vertex of $U_R$ is a neighbor of a vertex 
of $V_R$, if they are neighbors in $G$. Similarly, the bipartite subgraph 
$H_B$ of $G$ has $(U_B,V_B)$ as vertex set, where  $\{U_B, V_B\}=\{V(\Omega_B), V(G) \setminus V(\Omega_B)\}$ and
$|U_B|\leq|V_B|$. 
Set $\alpha=|U_R|$ and $\beta=|U_B|$ and note that $r+1\leq \alpha, \beta\leq {n\over 2}$. Consider the bipartite graph $H_R$. 
Note that there is no red edge in $E(H_R)=E_G(U_R,V_R)$. 
Therefore, in view of (\ref{degree}),    

$$
\begin{array}{rcl}
|{\rm NEU}(G)| & \geq &
\displaystyle\sum_{u\in U_R} ({\rm deg}_{G}(u)-{\rm deg}_{G^R}(u)-{\rm deg}_{G^B}(u))\\ 
& & \\
& \geq & \displaystyle\sum_{u\in U_R} ({\rm deg}_{G}(u)-(\alpha-1)-{{\rm deg}_{G}(u)+3k+4\over 2})\\ 
& & \\
& \geq &  \displaystyle\sum_{u\in U_R} ({{\rm deg}_{G}(u)\over 2}-\alpha-{3k+2\over 2})\\
& & \\
&\geq & {1\over 2} \delta n\alpha-\alpha^2-{3k+2\over 2}\alpha.
\end{array}
$$

Since there are ${\rm cut}_{r+1}(G)-1$ neutral edges in $G$ and that ${\rm cut}_{r+1}(G)-1\leq (n-r-1)(r+1)-1$, we should have  
$(n-r-1)(r+1)-1\geq {1\over 2} \delta n\alpha-\alpha^2-{3k+2\over 2}\alpha$. 
This implies that either $r+1\leq \alpha\leq {3(r+1)\over \delta}$ or ${\delta\over 2}n-o(n)\leq \alpha\leq {n\over 2}$ provided that $N$ is large enough. Similarly, we have either $r+1\leq \beta\leq {3(r+1)\over \delta}$ or ${\delta\over 2}n-o(n)\leq \beta\leq {n\over 2}$ provided that $N$ is large enough.\\ 

{\bf Case I:} ${\delta\over 2}n-o(n)\leq \min\{|V(\Omega_t)|, |V(G) \setminus V(\Omega_t)|\} \leq {n\over 2};\ t\in \{R,B\}$\\
First,  assume that 
${\delta\over 2}n-o(n)\leq \alpha\leq {n\over 2}$ and consider the bipartite graph $H_R$. Note that  
$$|E(H_R)\cap E(G_k)|\geq \alpha({\delta n}-\alpha+1)\geq  \alpha({\delta n}-\alpha)\geq ({2\delta-1\over 4})n^2,$$ 
provided that $N$ is large enough. 
Since, there is no red edges between two parts  $U_R$ and $V_R$, and moreover,  
there are at most $(n-r-1)(r+1)-1=o(n^2)$ neutral edges between them, one can conclude that there are at least $({2\delta-1\over 4})n^2-o(n^2)$ blue edges in $E(H_R)\cap E(G^B_k)$. 

Call an edge of $G_k$ an {\it unusual} edge, if it is either a neutral edge in $G_k$ or a blue edge of $E(G_k)\setminus E(H_R)$. 
Also, we call two $3$-blocks {\it consecutive}, if the edge set of these blocks appear 
consecutively in the ordering $\sigma$. In other words, there are no other edges among them in the ordering $\sigma$. 
One can check that there exists at least an unusual edge in any two consecutive $3$-blocks of $G_k$. 
In view of (\ref{numbertriangle2}), the ordering $\sigma$, and the aforementioned discussion, one can check that the number of unusual edges is at least 
$${1\over 2}\left({|E(G_k)|\over 3n(n-2)}(4|E(G_k)|-n^2)-o(n^2)\right)-O(n)={|E(G_k)|\over 6n(n-2)}(4|E(G_k)|-n^2)-o(n^2).$$ 

Since the number of neutral edges is $o(n^2)$,  one can conclude that the number of blue edges in $E(G^B_k)\setminus E(H_R)$ is at least $|{E(G_k)|\over 6n(n-2)}(4|E(G_k)|-n^2)-o(n^2)$. Accordingly, the number of blue edges in $G_k$ is 
at least 

$$\begin{array}{lll}
|E(G^B_k)\setminus E(H_R)| & + & |E(H_R)\cap E(G_k^B)| \geq \\
&&\\
|{E(G_k)|\over 6n(n-2)}(4|E(G_k)|-n^2)-o(n^2) & + & {2\delta -1 \over 4}n^2.
\end{array}$$

Set $\bar{\delta}n$ to be the average degree of the graph $G_k$ for which we have $|E(G_k)|={\bar{\delta}n^2\over 2}$. 
Note that $\bar{\delta}\geq \delta>{5\over 6}$. One can check that  there is a $\mu=\mu(\delta)>0$ such that 
$${|E(G_k)|\over 6n(n-2)}(4|E(G_k)|-n^2)+{2\delta -1 \over 4}n^2 > (1+\mu){|E(G_k)|\over 2},$$ 
or equivalently,  
$${\bar{\delta}n^2\over 12n(n-2)}(2\bar{\delta}n^2-n^2)+{2\delta -1 \over 4}n^2 > (1+\mu){\bar{\delta}n^2\over 4}.$$
To see this, one can check if $\mu$ is sufficiently small, then the following inequality holds for any real number $x$ 
$${1\over 6}x^2-({1\over 3}+ {\mu\over 4})x+{2\delta -1 \over 4}>0.$$

Therefore, for large enough $N$, 
$${\bar{\delta}n^2\over 12n(n-2)}(2\bar{\delta}n^2-n^2)+{2\delta -1 \over 4}n^2-o(n^2) > {\bar{\delta}n^2\over 4}+1={|E(G_k)|\over 2}+1.$$
This contradicts this fact that the number of blue edges in $G_k$ is at most ${|E(G_k)|\over 2}+1$. 
Similarly, if ${\delta\over 2}n-o(n)\leq \beta\leq {n\over 2}$, we get a contradiction.\\

{\bf Case II:} $r+1 \leq \min\{|V(\Omega_t)|, |V(G) \setminus V(\Omega_t)|\} \leq {3(r+1)\over \delta};\ t\in \{R,B\}$\\
In view of the aforementioned discussion, we can assume $r+1\leq \alpha, \beta \leq {3(r+1)\over \delta}$.
Note that for any vertex $x\in U_R\cap U_B$, all edges in $G$ and between $x$ and the vertices of 
$V(G)\setminus (U_R\cup U_B)$ are neutral, i.e., each edge in $E_{G}(U_R\cap U_B, V(G)\setminus (U_R\cup U_B))$ is neutral.
One can readily check that if $|U_R\cap U_B|\geq{2(r+1)\over\delta}$ and $N$ is sufficiently large,  
then the number neutral edges in $G$ is at least 
$$|U_R\cap U_B|(\delta n-|U_R\cup U_B|)\geq {2(r+1)\over \delta}(\delta n-{6(r+1)\over\delta})>(r+1)(n-r-1)$$ 
provided that $N$ is large enough, 
which is impossible. Therefore, we can suppose $|U_R\cap U_B|\leq {2(r+1)\over \delta}$. 
Now consider a vertex $v_i\in U_R\setminus U_B$.
Since all of $4$-blocks are chosen from a monogamous packing, one can check that the number of $4$-blocks in $H_i$ having some vertices of $U_R\cup U_B\setminus \{v_i\}$ is at most $|U_R\cup U_B|-1\leq {6(r+1)\over \delta}-1$. 

One can check that any $4$-block in $H_i$ which has no red edge incident with $v_i$ contains at most one red edge. 
Suppose that there are $\gamma$ consecutive $4$-blocks of $H_i$ (with respect to the ordering $\sigma$) such that 
each of them has no vertex in $U_R\cup U_B\setminus \{v_i\}$. Clearly, these blocks have no red edges 
incident with $v_i$. Consequently, they have at most $\gamma$ red edges. 
Since the edges of these blocks are consecutive in $\sigma$, they contain at most $\gamma+1$ blue edges. This implies that there are at least $4\gamma-(2\gamma+1)=2\gamma-1$ neutral edges among the edges of these $4$-blocks. Since the number of $4$-blocks in $H_i$ containing some vertex of $(U_R\cup U_B)\setminus \{v_i\}$ is at most $|U_R\cup U_B|-1$, we have 

\begin{equation}\label{neuedgezi}
|{\rm NEU}(H_i-(U_R\cup U_B\setminus\{v_i\})|\geq 2l-3|U_R\cup U_B|+3. 
\end{equation}

Also, for any $1\leq j \leq k-1$, in view of ordering of the edge set of 
$G_j$, we can choose at least 
${{\rm deg}_{G_j}(v_i)-3\over 2}$, edge-disjoint $(\sigma,v_i)$-consecutive paths in $G_j$. 
Moreover, in view of ordering of the edge set of 
$G_k$, we can choose at least ${{\rm deg}_{G_k}(v_i)-7\over 2}$, edge-disjoint $(\sigma,v_i)$-consecutive paths in 
$G_k\setminus E(C_n''\cup C_n''')$. Choose  
$\displaystyle\sum_{j=1}^{k-1}{{\rm deg}_{G_j}(v_i)-3\over 2}+{{\rm deg}_{G_k}(v_i)-7\over 2}$ edge-disjoint $(\sigma,v_i)$-consecutive paths in $G$. 
The number of these $(\sigma,v_i)$-consecutive paths which 
have a nonempty intersection with $E(H_i)$ (resp. $E(G[U_R\cup U_B])$) is at most $l+1$ (resp. $|U_R\cup U_B|-1$). 
In view of Lemma~\ref{consecutive} (set $E_{v_i}^R=G[U_R\cup U_B]\cup H_i$), one can see that the number of neutral edges incident with $v_i$ which are~not in $E(H_i)$ or $E(G[U_R\cup U_B])$, i.e., $|{\rm NEU}(v_i, G\setminus (H_i\cup G[U_R\cup U_B]))|$, is at least 
$$\begin{array}{rll}
 &  & \displaystyle\sum_{j=1}^{k-1}{{\rm deg}_{G_j}(v_i)-3\over 2}+{{\rm deg}_{G_k}(v_i)-7\over 2}-(l+1)-(|U_R\cup U_B|-1)\\
&& \\
 & = & {1\over 2}{\rm deg}_G(v_i)-{3k\over 2}-l-|U_R\cup U_B|-2
\end{array}$$

Similarly, with the same argument, we can obtain the same assertions for any vertex $v_j\in U_B\setminus U_R$. 

Hence, in view of (\ref{neuedgezi}) and since $l=\lceil {26r+26\over \delta}+{3k\over 2}\rceil$, for any vertex $v_i\in (U_R\setminus U_B)\cup(U_B\setminus U_R)=U_R\cup U_B\setminus U_R\cap U_B$, we have 

$$\begin{array}{rll}
& &  |{\rm NEU}(v_i, G\setminus (H_i\cup G[U_R\cup U_B]))|+|{\rm NEU}(H_i-(U_R\cup U_B\setminus\{v_i\}))| \\
&&\\
 & \geq  &{1\over 2}{\rm deg}_G(v_i)-{3k\over 2}-l-|U_R\cup U_B|-2+ 2l-3|U_R\cup U_B|+3\\
&&\\
 & = & {{\rm deg}(v_i)\over 2}+l-4|U_R\cup U_B|-{3k\over 2}+1\\
 &&\\
 & > &  {{\rm deg}(v_i)\over 2}+{2(r+1)\over \delta}.
\end{array}
$$

Assume that $u, u'\in U_R\cap U_B$ and $v_i, v_j \in (U_R\cup U_B) \setminus (U_R\cap U_B)$, where  $u\neq u'$ and $v_i\neq v_j$. 
Note that 
$${\rm NEU}(u, G- (U_R\cup U_B\setminus\{u\}))\cap {\rm NEU}(u', G- (U_R\cup U_B\setminus\{u'\}))=\varnothing,$$
$${\rm NEU}(u, G- (U_R\cup U_B\setminus\{u\}))\cap {\rm NEU}(v_i, G\setminus (H_i\cup G[U_R\cup U_B]))=\varnothing,$$
$${\rm NEU}(u, G- (U_R\cup U_B\setminus\{u\}))\cap{\rm NEU}(H_i-(U_R\cup U_B\setminus\{v_i\}))=\varnothing,$$
$${\rm NEU}(v_i, G\setminus (H_i\cup G[U_R\cup U_B])) \cap {\rm NEU}(v_j, G\setminus (H_j\cup G[U_R\cup U_B]))=\varnothing,$$ 
$${\rm NEU}(H_i-(U_R\cup U_B\setminus\{v_i\})) \cap{\rm NEU}(v_j, G\setminus (H_j\cup G[U_R\cup U_B]))=\varnothing,$$ 
and 
$${\rm NEU}(H_i-(U_R\cup U_B\setminus\{v_i\})) \cap {\rm NEU}(H_j-(U_R\cup U_B\setminus\{v_j\}))=\varnothing.$$
Now by counting the number of neutral edges of $G$ incident with some vertices in $U_R\cup U_B$, we get a contradiction. 
First, note that all edges between $U_R\cap U_B$ and the vertices of $V(G)\setminus (U_R\cup U_B)$ are neutral. Also, 
$$\begin{array}{lll}
|{\rm NEU}(G)| & \geq& \displaystyle\sum_{u\in U_R\cap U_B} |{\rm NEU}(u, G- (U_R\cup U_B\setminus\{u\}))| \\ 
   &&\\
   &+& \displaystyle\sum_{v_i\in (U_R\cup U_B)\setminus U_R\cap U_B}|{\rm NEU}(v_i, G\setminus (H_i\cup G[U_R\cup U_B]))|\\
&&\\
&+ & \displaystyle\sum_{v_i\in (U_R\cup U_B)\setminus U_R\cap U_B} |{\rm NEU}(H_i-(U_R\cup U_B\setminus\{v_i\}))|\\
 &&\\
  & \geq & |E_G(U_R\cap U_B, V(G)\setminus (U_R\cup U_B)|+ \\
   &&\\
&+ & \displaystyle\sum_{v\in (U_R\cup U_B)\setminus (U_R\cap U_B)}({{\rm deg}_G(v)\over 2}+{2(r+1)\over \delta}). 
\end{array}
$$ 
Define
$$\begin{array}{lllll}
J & = & \displaystyle\sum_{v\in (U_R\cup U_B)\setminus (U_R\cap U_B)}\left({{\rm deg}_G(v)\over 2}+{2(r+1)\over \delta}\right) & \\
 &&&&\\
& = & {2(r+1)\over \delta}|(U_R\cup U_B)\setminus (U_R\cap U_B)| & + &
\displaystyle\sum_{v\in (U_R\cup U_B)\setminus (U_R\cap U_B)}{{\rm deg}_G(v)\over 2}.
\end{array}$$
Assume that $(U_R\cup U_B)\setminus (U_R\cap U_B)=\{u_1,u_2,\ldots,u_d\}$
such that ${\rm deg}_G(u_1)\leq {\rm deg}_G(u_2)\leq\cdots\leq{\rm deg}_G(u_d)$.
Note that $d\geq 2r+2-2|U_R\cap U_B|$. 
Set 
$$d'=\max\{0, r+1-|U_R\cap U_B|\},\ Y=\{u_1,u_2,\ldots,u_{d'}\},\  Z=Y\cup (U_R\cap U_B),$$
where if $d'=0$, then $Y=\varnothing$. Note that $|Z|\geq r+1$. 
Now we have
$$\begin{array}{lll}
|{\rm NEU}(G)| & \geq &  |E_G(U_R\cap U_B, V(G)\setminus (U_R\cup U_B)|+J\\
&&\\
& \geq &  |E_G(U_R\cap U_B, V(G)\setminus (U_R\cup U_B)|+{2(r+1)\over \delta}|(U_R\cup U_B)\setminus (U_R\cap U_B)|\\
&&\\
& +&  \displaystyle\sum_{u\in (U_R\cup U_B)\setminus (U_R\cap U_B)}{{\rm deg}_G(u)\over 2}
\end{array}$$
Note that $|U_R\cap U_B|\leq {2(r+1)\over \delta}$. Hence, 
$$\begin{array}{lll}
|{\rm NEU}(G)| & \geq & |E_G(U_R\cap U_B, V(G)\setminus(U_R\cap U_B)|+
\displaystyle\sum_{u\in (U_R\cup U_B)\setminus (U_R\cap U_B)}{{\rm deg}_G(u)\over 2}\\
&&\\
 & \geq &
|E_G(U_R\cap U_B, V(G)\setminus(U_R\cap U_B)|+
\displaystyle\sum_{u\in Y}{\rm deg}_G(u)\\
&&\\
& \geq &
|E_G(Z, V(G)\setminus Z)| \\
&&\\
& \geq  & {\rm cut}_{r+1}(G),  
\end{array}$$
which is a contradiction. 

Therefore, the claim follows, i.e., $G^R$ or $G^B$ 
has a connected component  $P$ with $n-q$ vertices where $0\leq q\leq r$.
Without loss of generality, suppose that $P$ is such a component in $G^R$ with $n-q$ vertices.\\

\noindent{\bf Step V (Claim):} $P$ contains a ${\cal G}$-tree with $n-r$ vertices.\\

If $q=0$, then $P=G^R$ is a ${\cal G}$-subgraph. Also, 
the length of the alternating $2$-coloring  is at least 
$|E(G)|-|E(G_1)|+1$. Now since $G$ has 
${\cal G}$-forest property, $G^R$ has a ${\cal G}$-forest. 
Also, note that 

$$
\begin{array}{ll}
|E(G^R)|\geq {|E(G)|-{\rm cut}_{r+1}(G,{\cal G})\over 2} &> {5\over 24}n^2-{1\over 2}(r+1)(n-r-1)\\
&\\
& > {k(3k-2)(n-2)\over 2}+ (r+k-1)(n-1), 
\end{array}
$$
provided that $N$ is large enough.
Therefore, in view of Lemma~\ref{treesubgraph}, the graph $G^R$ has a ${\cal G}$-tree with $n-r$ vertices, which implies the main claim. 

Thus suppose $q\not = 0$.  
Now we show that for large enough $N$ and for any $i\in\{1,2,\ldots,k\}$, $|E(P)\cap E(G_i)|\geq {2k-3\choose 2}+1$.
On the contrary, suppose that there is an $i_0\in\{1,2,\ldots,k\}$ such that $|E(P)\cap E(G_{i_0})|\leq {2k-3\choose 2}$. 
Therefore, there are at most ${q\choose 2}+{2k-3\choose 2}$ red edges in $G_{i_0}$, and consequently, since the edges of $G_{i_0}$ are consecutive in $\sigma$, there are 
at most ${q\choose 2}+{2k-3\choose 2}+1$ blue edges in $G_{i_0}$. 
This implies 
\begin{equation}\label{countneuGi}
|{\rm NEU}(G_{i_0})| \geq |E(G_{i_0})|- 2{q\choose 2}-2{2k-3\choose 2}-1.
\end{equation}
If $i_0=k$ and $N$ is large enough, then the number of neutral edges is at least 
$$|E(G_{k})|- 2{q\choose 2}-2{2k-3\choose 2}-1>{5\over 12}n^2-2{r\choose 2}-2{2k-3\choose 2}-1> (r+1)(n-r-1),$$ 
which is a contradiction. 
Thus we suppose $i_0<k$.  In view of (\ref{degGk}), 
the number of neutral edges incident with the vertices of $V(G)\setminus V(P)$ in the graph $G_k$
is at least 

$$
\begin{array}{l}
\displaystyle\sum_{u\in V(G)\setminus V(P)} ({\rm deg}_{G_k}(u)-{\rm deg}_{G_k^R}(u)-{\rm deg}_{G_k^B}(u)) \geq \\
\\
\displaystyle\sum_{u\in V(G)\setminus V(P)} \left({\rm deg}_{G_k}(u)-(q-1)-{{\rm deg}_{G_k}(u)+7\over 2}\right),
\end{array}$$ 
and consequently,
\begin{equation}\label{countneuGk}
|{\rm NEU}(G_k)| \geq \displaystyle\sum_{u\in V(G)\setminus V(P)} ({{\rm deg}_{G_k}(u)\over 2}-q-{5\over 2}). 
\end{equation}
Hence, in view of (\ref{countneuGi}) and (\ref{countneuGk}), the number of neutral edges in $G$ is at least 
$$
\begin{array}{lcl}
|{\rm NEU}(G_{i_0})|+|{\rm NEU}(G_k)| & \geq & |E(G_{i_0})|- 2{q\choose 2}-2{2k-3\choose 2}-1\\
&& \\
&& +\displaystyle\sum_{u\in V(G)\setminus V(P)} ({{\rm deg}_{G_k}(u)\over 2}-q-{5\over 2})\\
&& \\
&\geq & |E(G_{i_0})|+ {1\over 2} \delta nq-2q^2-2{2k-3\choose 2}-{3\over 2}q-1. 
\end{array}$$ 
Since $1\leq q\leq r=o(n)$ and $k=o(\sqrt{n})$, for large enough $N$, the number of neutral edges is more than 
$|E(G_{i_0})|-1$, which is impossible. Hence, for any $1\leq i\leq k$, $|E(P)\cap E(G_i)|\geq {2k-3\choose 2}+1$.  

Suppose 
that $F$ is a spanning ${\cal G}$-subgraph of $P$ with $k$ edges and the minimum number of cycles. 
In what follows, we show that 
$F$ is a forest with $k$ edges and in view of Lemma~\ref{treesubgraph}, we extend it to a ${\cal G}$-tree of $P$ with $n-r$ vertices.  On the contrary, suppose that $F$ is~not a forest.  Let 
$e$ be an edge of a cycle of $F$ and that $e\in E(F)\cap E(G_{j})$. 
Since $|E(P)\cap E(G_i)|\geq {2k-3\choose 2}+1$ for $i=1,2,\ldots,k$, and also, 
since $F$ has at most $2k-3$ non-isolated vertices, there exists a red edge $e'\in E(G_{j})$ incident with some isolated vertex of $F$. 
Define 
$F'=(F- e)\cup \{e'\}$. Clearly, $F'$ is a ${\cal G}$-subgraph of $P$ with $k$ edges. Also, the number of cycles of $F'$ is less than that of $F$,  which is a contradiction. 

In view of ${\rm cut}_{r+1}(G,{\cal G})\leq (r+1)(n-r-1)$, one can see that 
for large enough $N$, the number of edges of $P$ is at least 
$$
\begin{array}{ll}
|E(G^R)|-{q\choose 2}\geq {|E(G)|-{\rm cut}_{r+1}(G,{\cal G})\over 2}-{q\choose 2} &> {5\over 24}n^2-{1\over 2}(r+1)(n-r-1)-{r\choose 2}\\
&\\
& > {k(3k-2)(n-2)\over 2}+ (r+k-1)(n-1). 
\end{array}
$$ 
Therefore, in view of Lemma~\ref{treesubgraph}, one can see that $P$ has a ${\cal G}$-tree subgraph with $n-r$ vertices. This implies the main claim, 
and consequently, 
${\rm ex}_{alt}(G,{\cal G},{\cal T}_{n-r})\leq {\rm ex}(G,{\cal G},{\cal T}_{n-r})$. In view of 
Lemma~\ref{ex}, the lemma follows. 
}\end{proof}
We are now in a position to prove Theorem~\ref{mainthm}. \\

\noindent{\bf Completing the proof of Theorem~\ref{mainthm}}. In view of Lemma~\ref{lastlem}, one can see that the graph $G$ has ${\cal G}$-forest property. Now by Lemma~\ref{main}, the assertion holds. 
$\hfill \blacksquare$\\

For an ordering $\sigma$ of the edge set of $G$, a 
{\it $\sigma$-tree} of $G$ is a tree subgraph of $G$ such that it has no two consecutive edges in the ordering $\sigma$.  
Define ${\cal T}_{n-r}^{\sigma, \Delta}$ to be the set of all $\sigma$-tree subgraphs of $G$ with $n-r$ vertices and maximum degree at most 
$\Delta$. It is worth noting that, in view of the proof of 
Lemma~\ref{main}, we can determind the chromatic number of 
the graph ${\rm KG}(G, {\cal G}, {\cal T}_{n-r}^{\sigma, {n\over 2}+{3k+4\over 2}})$, where $G$ and ${\cal G}$ meet the conditions of the lemma 
and $\sigma$ is the ordering introduced in the first step. 
Precisely,   
$$\chi({\rm KG}(G, {\cal G}, {\cal T}_{n-r}^{\sigma, {n\over 2}+{3k+4\over 2}}))={\rm cut}_{r+1}(G, {\cal G}).$$
Note that any tree subgraph of $G^R$ or $G^B$ is a $\sigma$-tree subgraph, and moreover, in view of (\ref{degree}), it has maximum degree at most ${n\over 2}+{3k+4\over 2}$. 

By the aforementioned results, one can introduce some graph whose chromatic number is about twice of its clique number. To see this, 
note that if $n$ is sufficiently large, then $\chi({\rm KG}(K_n,{\cal T}_n))=n-1$. Also, 
if $n$ is an even integer,
then the complete graph $K_n$ is decomposable into edge-disjoint Hamiltonian paths. Hence, the clique number of
${\rm KG}(K_n,{\cal T}_n)$ is $\lfloor {n\over 2}\rfloor$.  It would be of interest to note that we cannot drop some conditions in 
Lemma~\ref{main}. For instance, one can see that Lemma~\ref{main} 
does~not hold for sparse graphs. To see this, consider the general Kneser graph ${\rm KG}(G,{\cal T}_n)$, where $G$ is a connected 
graph with $n$ vertices and $|E(G)|\leq 2n-3$.  One can check that 
the chromatic number of this graph is equal to $1$, while the size of the minimum cut of $G$ can be equal to $3$. 
Finally, in~\cite{2014arXiv1401.0138A}, it was shown that 
for a large family of graphs, we have 
$$\chi({\rm KG}(G,{\cal T}_3))< {\rm cut}_{n-2}(G)=|E(G)|-{\rm ex}(G, {\cal T}_{3}).$$ 
Hence, it seems that Lemma~\ref{main} does~not hold for the family of small trees.\\ 

Hedetniemi's conjecture asserts that
the chromatic number of the Categorical product of two graphs is the minimum of
that of graphs. This conjecture has been studied in the literature, see~\cite{2014arXiv1403.4404A, MR2171371, MR2445666, MR2825542}. 
A family of graphs is tight if  Hedetniemi's conjecture holds for any two graphs of this family. 
By Theorem~\ref{mainthm} and  in view of the results of~\cite{2014arXiv1403.4404A}, we can enrich 
a family of tight graphs.\\ 

\noindent{\bf Acknowledgement:}
The research of Meysam Alishahi was in part supported by a grant from IPM (No. 94050014).  
Also, the research of Hossein Hajiabolhassan was supported by ERC advanced grant GRACOL. Furthermore, 
part of this work done during a visit of  Hossein Hajiabolhassan to the Mittag-Leffler Institute (Djursholm, Sweden). 
The authors are grateful to Professor Carsten~Thomassen for his fruitful discussions. 
Moreover, they would like to thank Skype for sponsoring their endless conversations in two countries. 

\begin{thebibliography}{10}

\bibitem{2013arXiv1302.5394A}
M.~Alishahi and H.~Hajiabolhassan.
\newblock On the chromatic number of general Kneser hypergraphs.
\newblock {\em Journal of Combinatorial Theory, Series B}, 115:186 -- 209,
  2015.

\bibitem{2014arXiv1401.0138A}
M.~{Alishahi} and H.~{Hajiabolhassan}.
\newblock {Chromatic number via Tur\'an number}.
\newblock {\em ArXiv e-prints}, December 2013.

\bibitem{2014arXiv1403.4404A}
M.~{Alishahi} and H.~{Hajiabolhassan}.
\newblock {Hedetniemi's conjecture via altermatic number}.
\newblock {\em ArXiv e-prints}, March 2014.

\bibitem{MR0297607}
A.~Hajnal and E.~Szemer{\'e}di.
\newblock Proof of a conjecture of {P}. {E}rd{\H o}s.
\newblock In {\em Combinatorial theory and its applications, {II} ({P}roc.
  {C}olloq., {B}alatonf\"ured, 1969)}, pages 601--623. North-Holland,
  Amsterdam, 1970.

\bibitem{HHHoi}
Nguyen~Huu Hoi.
\newblock {\em On the problems of edge disjoint cliques in graphs}.
\newblock PhD thesis, E{\" o}tv{\" o}s Lor{\'a}nd University, Mathematics
  Department, 2005.

\bibitem{MR2117355}
Robert~E. Jamison and Henry~Martyn Mulder.
\newblock Constant tolerance intersection graphs of subtrees of a tree.
\newblock {\em Discrete Math.}, 290(1):27--46, 2005.

\bibitem{MR0068536}
M.~Kneser.
\newblock Ein {S}atz \"uber abelsche {G}ruppen mit {A}nwendungen auf die
  {G}eometrie der {Z}ahlen.
\newblock {\em Math. Z.}, 61:429--434, 1955.

\bibitem{MR0369117}
S~.Kundu.
\newblock Bounds on the number of disjoint spanning trees.
\newblock {\em J. Combinatorial Theory Ser. B}, 17:199--203, 1974.

\bibitem{MR1723879}
C.~C.~Lindner and A.~Rosa.
\newblock Monogamous decompositions of complete bipartite graphs, symmetric
  {H}-squares, and self-orthogonal {$1$}-factorizations.
\newblock {\em Australas. J. Combin.}, 20:251--256, 1999.

\bibitem{MR514625}
L.~Lov{\'a}sz.
\newblock Kneser's conjecture, chromatic number, and homotopy.
\newblock {\em J. Combin. Theory Ser. A}, 25(3):319--324, 1978.

\bibitem{MR1812338}
E.~M.~Palmer.
\newblock On the spanning tree packing number of a graph: a survey.
\newblock {\em Discrete Math.}, 230(1-3):13--21, 2001.
\newblock Paul Catlin memorial collection (Kalamazoo, MI, 1996).

\bibitem{MR2171371}
C.~Tardif.
\newblock Multiplicative graphs and semi-lattice endomorphisms in the category
  of graphs.
\newblock {\em J. Combin. Theory Ser. B}, 95(2):338--345, 2005.

\bibitem{MR2445666}
C.~Tardif.
\newblock Hedetniemi's conjecture, 40 years later.
\newblock {\em Graph Theory Notes N. Y.}, 54:46--57, 2008.

\bibitem{MR2825542}
X.~Zhu.
\newblock The fractional version of {H}edetniemi's conjecture is true.
\newblock {\em European J. Combin.}, 32(7):1168--1175, 2011.

\end{thebibliography}
\def\cprime{$'$} \def\cprime{$'$}

\end{document}